\theoremstyle{plain}
\newtheorem{theorem}{Theorem}[section]
\newtheorem{proposition}[theorem]{Proposition}
\newtheorem{lemma}[theorem]{Lemma}
\theoremstyle{definition}
\theoremstyle{remark}
\newcommand{\argmin}[1]{\underset{#1}{\mathrm{argmin}}}
\newcommand{\minimize}[1]{\underset{#1}{\mathrm{minimize}}}
\newcommand{\lmo}{\mathrm{LMO}}
\newcommand{\co}{\mathbf{co}}
\newcommand{\diam}{\mathrm{diam}}
\newcommand{\gap}{\mathbf{gap}}
\newcommand{\mD}{\mathcal D}
\newcommand{\mE}{\mathcal E}
\newcommand{\mS}{\mathcal S}
\newcommand{\R}{\mathbb R}
\newcommand{\mb}{\mathbf}
\newcommand{\bs}{\mathbf s}
\newcommand{\bx}{\mathbf x}
\newcommand{\bd}{\mathbf d}
\newcommand{\bz}{\mathbf z}
\newcommand{\bZ}{\mathbf Z}
\newcommand{\bS}{\mathbf S}
\newcommand{\FW}{\textsc{FW}}
\newcommand{\FWflow}{\textsc{FWFlow}}
\newcommand{\sign}{\mathbf{sign}}
\newcommand{\diag}{\mathbf{diag}}
\icmltitlerunning{A Multistep Frank-Wolfe Method}
\begin{document}

\onecolumn
\icmltitle{A Multistep Frank-Wolfe Method}
\icmlsetsymbol{equal}{*}

\begin{icmlauthorlist}
\icmlauthor{ Zhaoyue Chen}{equal,yyy}
\icmlauthor{Yifan Sun}{equal,yyy}
\end{icmlauthorlist}

\icmlaffiliation{yyy}{Stony Brook University}

\icmlcorrespondingauthor{Zhaoyue Chen}{zhaoychen@cs.stonybrook.edu}
\icmlcorrespondingauthor{Yifan Sun}{ysun@cs.stonybrook.edu}

\icmlkeywords{Frank-Wolfe, continuous methods, multistep discretization}

\vskip 0.3in



\printAffiliationsAndNotice{\icmlEqualContribution} 
\begin{abstract}
The Frank-Wolfe algorithm has regained much interest
in its use in structurally constrained machine learning applications. However, one major limitation of the Frank-Wolfe algorithm is the slow local convergence property due to the zig-zagging behavior.
We observe the zig-zagging phenomenon in the Frank-Wolfe method as an artifact of discretization,
and propose multistep Frank-Wolfe variants 
where the truncation errors decay as $O(\Delta^p)$, where $p$ is the method's order. This strategy ``stabilizes" the method, and allows tools like line search and momentum to have more benefit. However, our results suggest that the worst case convergence rate of Runge-Kutta-type discretization schemes cannot improve upon that of the vanilla Frank-Wolfe method for a rate depending on $k$. Still, we believe that this analysis adds to the growing knowledge of flow analysis for optimization methods, and is a cautionary tale on the ultimate usefulness of multistep methods.
\end{abstract}

\section{Introduction}

The Frank-Wolfe method attacks problems of form
\begin{equation}
\minimize{\bx\in \mD}\quad f(\bx)
\label{eq:fw-main}
\end{equation}
where $f:\R^n\to\R$ is an everywhere-differentiable function and $\mD$ is a convex compact constraint set, via the repeated iteration 
\[
\begin{array}{rcl}
\bs^{(k)} &=& \argmin{\bs\in\mD} \; \nabla f(\bx^{(k)})^T\bs\\
\bx^{(k+1)} &=& \gamma^{(k)} \bs^{(k)} + (1-\gamma^{(k)}) \bx^{(k)}.
\end{array}
\tag{\FW}
\]
The first operation is often referred to as the \emph{linear minimization oracle (LMO)}, and is the support function of $\mD$ at $-\nabla f(\bx)$.
In particular, computing the LMO is often computationally cheap, especially when $\mD$ is the level set of a sparsifying norm, e.g. the 1-norm or the nuclear norm. 
In this regime, the advantage of such projection-free methods over methods like projected gradient descent is the cheap per-iteration cost.
However, the tradeoff of the cheap per-iteration rate is that the convergence rate, in terms of number of iterations $k$, is often much slower than that of projected gradient descent \citep{lacoste2015global,freund2016new}. While various acceleration schemes \citep{lacoste2015global} have been proposed and several improved rates given under specific problem geometry \citep{garber2015faster}, by and large the ``vanilla'' Frank-Wolfe method, using the ``well-studied step size'' $\gamma^{(k)} = O(1/k)$, can only be shown to reach $O(1/k)$ convergence rate in terms of objective value decrease \citep{Canon1968ATU,jaggi2013revisiting,freund2016new}

\paragraph{Continuous-time Frank-Wolfe.}
In this work,  we view the method \FW~as an Euler's discretization of the differential inclusion
\[
\begin{array}{rcl}
\dot x(t) &=& \gamma(t)(s(t)-x(t)),\\  s(t) &\in& \argmin{s\in \mD} \nabla f(x(t))^T(s-x(t))
\end{array}
\tag{\FWflow}
\]
where $x(t)$, $s(t)$, and $\gamma(t)$ are continuations of the iterates $\bx^{(k)}$, $\bs^{(k)}$, and coefficients $\gamma^{(k)}$; i.e. $\bx^{(k)} = x( k\Delta )$ for some discretization unit $\Delta$. This was first studied in \citet{jacimovic1999continuous}, and is a part of the construct presented in \citet{diakonikolas2019approximate}. However, neither paper considered the effect of using advanced discretization schemes to better imitate the flow, as a way of improving the method. 
From analyzing this system, we reach the following conclusions through numerical experimentation:
\begin{itemize}[leftmargin=*]
\item (Positive result.) We show that for a class of mixing parameters $\gamma(t)$,  \FWflow~can have an arbitrarily fast convergence rate given  aggressive enough mixing parameters.

\item (Interesting result.)We show qualitatively that, on a number of machine learning tasks, unlike \FW, the iterates $x(t)$ in \FWflow~usually do not zig-zag. 

\end{itemize}

\paragraph{Multistep methods.} While continuous time analyses offer improved intuition in idealized settings, it does not provide a usable method. 
We therefore investigate improved discretization schemes applied to Frank-Wolfe.
Here, we make the following  discoveries:
 \begin{itemize}[leftmargin=*]
 
 \item (Negative result.) We show that over for a particular popular class of multistep methods (Runge Kutta methods) no acceleration can be made when the step size  $\gamma_k = O(1/k)$. 
 
 
 \item (Usefulness.) However, higher order multistep methods tend to have better \emph{search directions}, which accounts for less zig-zagging. This leads to better performance when mixed with line search or momentum methods.
 
\end{itemize}

    
    
    
    

\section{Continuous time Frank-Wolfe}


\begin{proposition}[Continuous flow rate]
\label{prop:fwflow-rate}
Suppose that $\gamma(t) = \frac{c}{c+t}$, for some constant $c \geq 1$. Then the flow rate of \FWflow~ has an upper bound of 
\begin{equation}
\frac{f(x(t))-f^*}{f(x(0))-f^*} \leq  \left(\frac{c}{c+t}\right)^c =  O\left(\frac{1}{t^c}\right).
\label{eq:cont-upper-rate}
\end{equation}
\end{proposition}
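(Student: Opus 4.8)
The plan is to reduce everything to a scalar linear differential inequality for $h(t) := f(x(t)) - f^*$. First I would differentiate along the flow: assuming the trajectory is well-defined and absolutely continuous (a point worth flagging, since $\FWflow$ is a differential inclusion and $s(t)$ need not be unique, but I will take a genuine solution as given), the chain rule together with $\dot x(t) = \gamma(t)(s(t)-x(t))$ gives
\[
\dot h(t) = \nabla f(x(t))^T \dot x(t) = \gamma(t)\,\nabla f(x(t))^T(s(t)-x(t)).
\]

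The key step is to control the inner product by the Frank-Wolfe duality gap. Let $x^*\in\mD$ be a minimizer of $f$ (it exists since $\mD$ is compact and $f$ continuous). Because $s(t)$ minimizes $s\mapsto \nabla f(x(t))^T(s-x(t))$ over $\mD$ and $x^*\in\mD$, we get $\nabla f(x(t))^T(s(t)-x(t)) \le \nabla f(x(t))^T(x^*-x(t))$, and convexity of $f$ then yields $\nabla f(x(t))^T(x^*-x(t)) \le f^* - f(x(t)) = -h(t)$. Substituting and using $\gamma(t) = \tfrac{c}{c+t} \ge 0$, we obtain the differential inequality $\dot h(t) \le -\tfrac{c}{c+t}\,h(t)$.

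Finally I would integrate via Grönwall / an integrating factor. If $h$ ever reaches $0$ we are already done (and $h\ge 0$ throughout by optimality of $f^*$), so assume $h(t)>0$; dividing gives $\tfrac{d}{dt}\ln h(t) \le -\tfrac{c}{c+t}$, and integrating from $0$ to $t$ gives $\ln\tfrac{h(t)}{h(0)} \le -c\big(\ln(c+t)-\ln c\big) = c\ln\tfrac{c}{c+t}$, i.e. $\tfrac{h(t)}{h(0)} \le \big(\tfrac{c}{c+t}\big)^c$, which is exactly \eqref{eq:cont-upper-rate}; the $O(1/t^c)$ statement is immediate. I do not anticipate a serious obstacle: the only delicate issues are the existence/regularity of the flow (so that the differentiation step is legitimate) and the trivial edge case $h(t)=0$. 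Note the hypothesis $c\ge 1$ is not actually needed for this continuous-time bound — it presumably matters for the discretized variants later — but it is harmless here.
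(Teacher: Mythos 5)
Your proposal is correct and follows essentially the same route as the paper: differentiate $h(t)=f(x(t))-f^*$ along the flow, bound $\nabla f(x)^T(s-x)$ by $-h(t)$ via the LMO optimality and convexity, and integrate the resulting inequality $\dot h \le -\gamma(t)h(t)$ to get $h(t)\le h(0)e^{-\int_0^t\gamma(\tau)d\tau} = h(0)\left(\tfrac{c}{c+t}\right)^c$. Your added remarks on regularity of the flow and the dispensability of $c\ge 1$ are sensible but not part of the paper's argument.
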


Note that this rate  is \emph{arbitrarily fast}, as long as we keep increasing $c$.
This is in stark contrast to the usual convergence rate of the Frank-Wolfe method, which in general \emph{cannot} improve beyond $O(1/k)$ for \emph{any} $c$. 
Figure \ref{fig:limit_continuous} shows this continuous rate as the limiting behavior of \FW, where the discretization steps $\Delta\to 0$. 
\begin{figure*}
    \centering
    \includegraphics[width=4.5in,trim={2.5cm 8cm 2.5cm 0},clip]{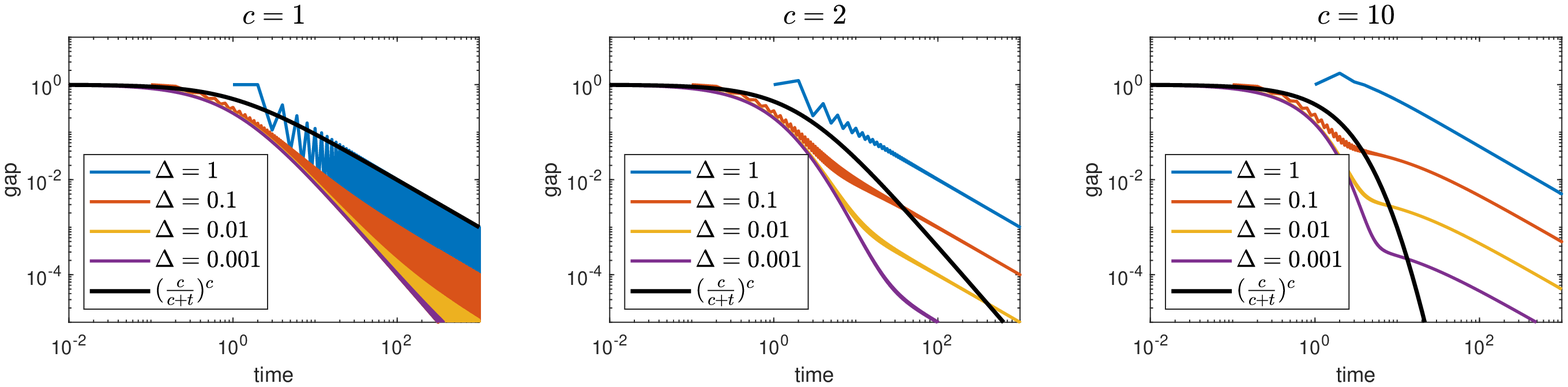}
    \caption{\textbf{Continuous vs discrete.} A comparison of the numerical error vs 
    compared with derived rate. 
    }
    \label{fig:limit_continuous}
\end{figure*}

\subsection{Continuous time Frank Wolfe does not zig-zag}

Figure \ref{fig:continuous_no_zigzag} quantifies this notion more concretely. We first propose to measure ``zig-zagging energy" by averaging the deviation of each iterate's direction across $k$-step directions, for $k = 1,...,W$, for some measurement window $W$:
\[
\mE_{\mathrm{zigzag}}(\bx^{(k+1)},...,\bx^{(k+W)}) = \frac{1}{W-1}\sum_{i=k+1}^{k+W-1} \Big\|\underbrace{\left(I-\frac{1}{\|{\bar\bd}^{(k)}\|_2}\bar\bd^{(k)}({\bar\bd}^{(k)})^T\right)}_{\mb Q}\bd^{(i)}\Big\|_2,
\]
where $\bd^{(i)} = \bx^{(i+1)}-\bx^{(i)}$ is the current iterate direction and $\bar\bd^{(k)} = \bx^{(k+W)}-\bx^{(k)}$ a ``smoothed'' direction. The projection operator $\mb Q$ removes the component of the current direction  in the direction of the smoothed direction, and we measure this ``average deviation energy.''
We divide the trajectory into these window blocks, and report the average of these measurements $\mE_{\mathrm{zigzag}}$ over $T=100$ time steps (total iteration = $T/\Delta$). 
Figure \ref{fig:continuous_no_zigzag} (top table) exactly shows this behavior, where the problem is sparse constrained logistic regression minimization over several machine learning classification datasets~\citep{guyon2004result} (Sensing (ours), Gisette \footnote{Full dataset available at \url{https://archive.ics.uci.edu/ml/datasets/Gisette}. We use a subsampling, as given in 
\url{https://github.com/cyrillewcombettes/boostfw}.
} and Madelon \footnote{Dataset: \url{https://archive.ics.uci.edu/ml/datasets/madelon}})
are shown in Fig. \ref{fig:continuous_no_zigzag}.




From this experiment, we see that any Euler discretization of \FWflow~will always zig-zag, in that the directions often alternate. But, by measuring the deviation across a windowed average, we see two things: first, the deviations converge to 0 at a rate seemingly linear in $\Delta$, suggesting that in the limit as $\Delta \to 0$, the trajectory is smooth. Second, since these numbers are more-or-less robust to windowing size, it suggests that the smoothness of the continuous flow is on a macro level.

\begin{figure*}
    \centering
    \begin{tabular}{lcr}
    \begin{minipage}{.2\textwidth}
    \includegraphics[width=1.5in]{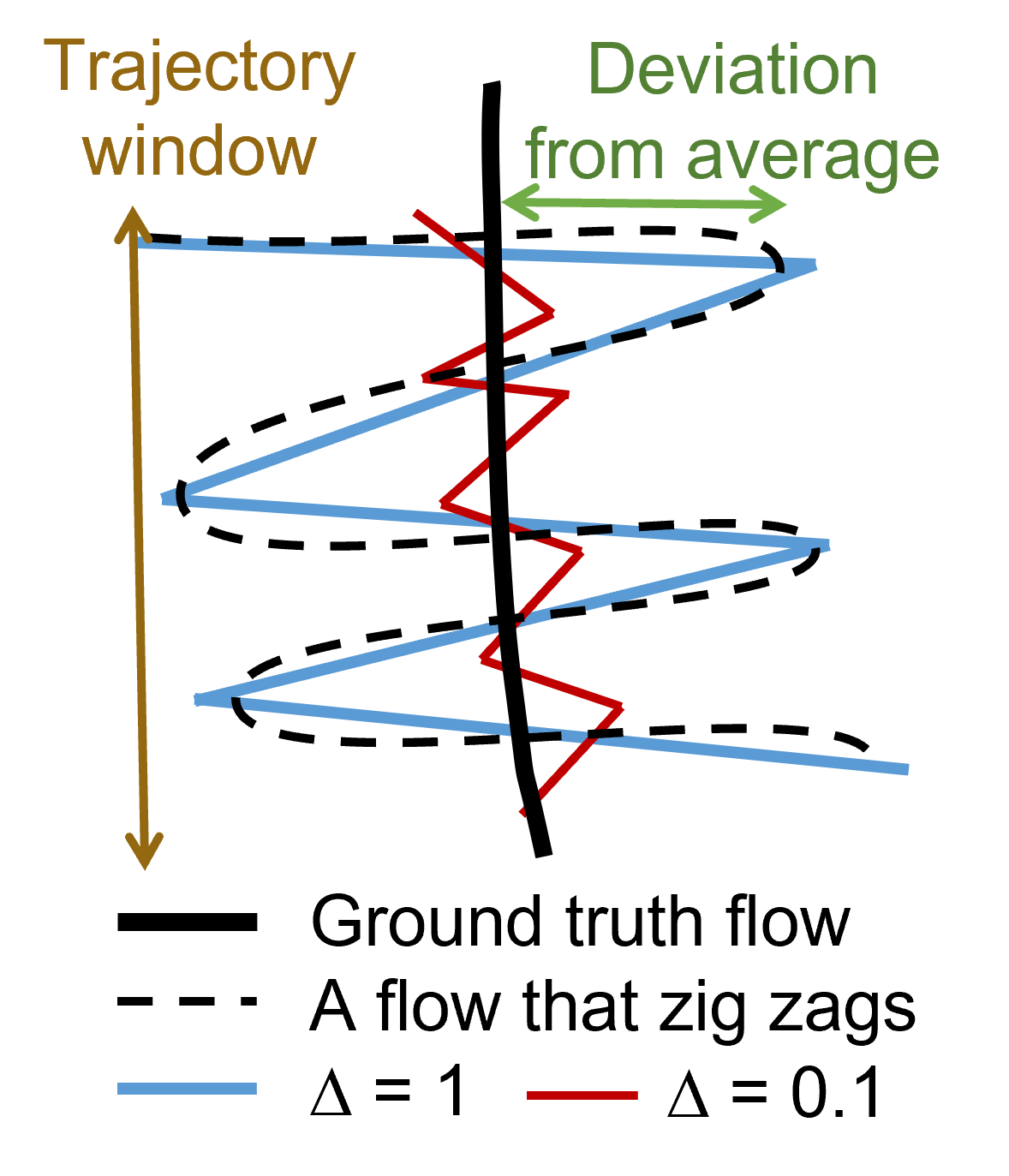}
    \end{minipage}
    &$\quad$ &
    \begin{minipage} {.65\textwidth}
    \begin{tabular}{c}
    
    \begin{minipage}{.8\textwidth}
    \begin{tabular}[t]{c|c|c|c}
    \hline
    Test set&$\Delta = 1$&$\Delta = 0.1$ & $\Delta = 0.01$\\
    \hline
    Sensing & 105.90 / 140.29 & 10.49 / 13.86 & 1.05 / 1.39\\
    Madelon &0.11 / 0.23 &0.021 / 0.028&0.0021 / 0.0028\\
    Gisette &1.08 / 1.74&0.21 / 0.28&0.021 / 0.028\\
    \hline
    \end{tabular}
    \begin{center}
        Zigzagging in continuous flow
    \end{center}
    \end{minipage}
    \\
     \\
    \begin{minipage}{.8\textwidth}
    \begin{tabular}[t]{c|c|c|c}
    \hline
    Test set& FW & FW-MID & FW-RK4\\
    \hline
     Sensing & 105.90 / 140.29 & 0.57 / 1.0018 & 0.015 / 0.033\\
    Madelon & 0.031 / 0.040 & 0.025 / 0.029 &0.025 / 0.029\\
    Gisette & 0.30 / 0.40 & 0.25 / 0.11 & 0.22 / 0.20\\
    \hline
    \end{tabular}
    \begin{center}
        Zigzagging in multistep methods
    \end{center}
    \end{minipage}
    \end{tabular}
    
    \end{minipage}
    
    \end{tabular}
    
    \caption{\textbf{Zig-zagging on real datasets.} 
    Average deviation of different discretizations of \FWflow. Top table uses different $\Delta$s and uses the vanilla Euler's discretization (FW). Bottom uses $\Delta = 1$ and different multistep methods.  
    The two numbers in each box correspond to window sizes 5 / 20.
    }
    \label{fig:continuous_no_zigzag}
\end{figure*}




\section{Runge-Kutta multistep methods}
\label{sec:rkmethod}
\subsection{The generalized Runge-Kutta family}

We now consider Runge-Kutta (RK) methods, a generalized class of higher order methods ($p \geq 1$).
These methods are fully parametrized by some choice of $A\in \R^{q\times q}$, $\beta\in \R^q$, and $\omega\in \R^q$ and at step $k$ can be expressed as (for $i = 1,...,q$)
\begin{eqnarray}
\xi_i &=& \displaystyle\dot x\big(k+\omega_i ,\;  \bx^{(k)}+  \sum_{j=1}^q A_{ij} \xi_j\big),\\
\bx^{(k+1)} &=& \bx^{(k)}+ \sum_{i=1}^q \beta_i \xi_i.
\label{eq:general_discrete}
  \end{eqnarray}
  For consistency,  $\sum_i \beta_i = 1$, and to maintain explicit implementations,  $A$ is always strictly lower triangular. As a starting point, $\omega_1 = 0$.
  Specifically, we refer to the iteration scheme in \eqref{eq:general_discrete} as a \emph{$q$-stage RK discretization method}.
   Note that via our formulation, we capture not just all RK methods, but all explicit multistep methods satisfying this mild consistency constraint, which then implies method feasibility.
A full list of the RK methods used in our experiments is described in the Appendix \ref{app:sec:rk}.

Figure \ref{fig:triangle_momentum} (top row) compares these three implementations on the toy problem,
and shows their rate of convergence. The closeness of the new curves with the continuous flow is apparent; however, while multistep methods are converging faster than vanilla FW, the rate does not seem to change.
However, one thing that \emph{is} visually apparent in Figure \ref{fig:triangle_momentum} (top row)  is that higher order  multistep methods establish better search directions. 
Additionally, we numerically quantify less zig-zagging behavior (lower table in Figure \ref{fig:continuous_no_zigzag}). This is still good news, as there are still several key advantages to such an improvement: namely, better uses of momentum and line search.

\subsection{RK convergence behavior}
All proofs are in the appendix.
\begin{proposition}[Positive result]
All Runge-Kutta methods converge at worst with rate  $f(\bx^{(k)})-f(\bx^*)\leq O(1/k)$.
\label{prop:rungekutta-positive}
\end{proposition}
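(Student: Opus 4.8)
The plan is to run the textbook $O(1/k)$ analysis of vanilla \FW{} and show that every extra Runge--Kutta stage only perturbs the usual recursion on the optimality gap $h^{(k)}:=f(\bx^{(k)})-f(\bx^*)$ at order $O(1/k^2)$. Recall that the classical argument combines $L$-smoothness (the descent lemma) with the LMO inequality and convexity to reach $h^{(k+1)}\le(1-\gamma^{(k)})h^{(k)}+O((\gamma^{(k)})^2)$, which for $\gamma^{(k)}=O(1/k)$ unwinds to $O(1/k)$; I want to recover exactly this recursion for a general $q$-stage RK step, with the stage-coupling terms folded into the $O((\gamma^{(k)})^2)$ remainder.

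First I would put one RK step into ``generalized \FW{}'' form. Since $A$ is strictly lower triangular, the stage $\xi_i=\gamma(k+\omega_i)(s_i-y_i)$ with $y_i=\bx^{(k)}+\sum_{j<i}A_{ij}\xi_j$ and $s_i\in\argmin{s\in\mD}\nabla f(y_i)^Ts$ depends only on earlier stages. A joint induction on $k$ -- using $\gamma(t)=c/(c+t)\to 0$, $\sum_i\beta_i=1$, and nonnegativity of the $\beta_i$ (true for every method used in the paper) -- then yields simultaneously: feasibility (or $O(1/k)$-near-feasibility) of $\bx^{(k)}$; the stage bounds $\|\xi_i\|=O(1/k)$, hence $\|y_i-\bx^{(k)}\|=O(1/k)$; and, unrolling the recursion once more,
\[
\bx^{(k+1)}-\bx^{(k)}=\sum_{i=1}^q\mu_i(s_i-\bx^{(k)}),\qquad \mu_i=\beta_i\gamma(k+\omega_i)+O(1/k^2),\qquad \Gamma^{(k)}:=\sum_i\mu_i=\gamma(k)+O(1/k^2).
\]
Hence $\bx^{(k+1)}=(1-\Gamma^{(k)})\bx^{(k)}+\Gamma^{(k)}\bv^{(k)}$ with $\bv^{(k)}:=\tfrac{1}{\Gamma^{(k)}}\sum_i\mu_i s_i\in\mD$ a convex combination of the $s_i$: an \FW{}-type step of size $\Gamma^{(k)}=O(1/k)$ toward a feasible point, the only catch being that $\bv^{(k)}$ is the linear-minimization target at the perturbed points $y_i$ rather than at $\bx^{(k)}$.

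The crux is to show that this ``wrong target'' degrades the gap estimate by only $O(1/k)$ -- and the key is that the degradation is governed by Lipschitz continuity of $\nabla f$ (which holds), not continuity of the LMO (which fails). For each $i$, optimality of $s_i$ for $\nabla f(y_i)$ over $\mD\ni\bx^*$ gives $\nabla f(y_i)^T(s_i-\bx^{(k)})\le\nabla f(y_i)^T(\bx^*-\bx^{(k)})$; two applications of $L$-smoothness (swapping $\nabla f(y_i)$ for $\nabla f(\bx^{(k)})$, with $\|y_i-\bx^{(k)}\|=O(1/k)$) followed by convexity of $f$ give
\[
\nabla f(\bx^{(k)})^T(s_i-\bx^{(k)})\le\nabla f(\bx^{(k)})^T(\bx^*-\bx^{(k)})+O(1/k)\le-h^{(k)}+O(1/k).
\]
Taking the $\mu_i$-weighted average (here nonnegativity of the $\mu_i$, valid for large $k$, is used) yields $\nabla f(\bx^{(k)})^T(\bv^{(k)}-\bx^{(k)})\le-h^{(k)}+O(1/k)$, so the descent lemma gives
\[
h^{(k+1)}\le h^{(k)}+\Gamma^{(k)}\nabla f(\bx^{(k)})^T(\bv^{(k)}-\bx^{(k)})+\tfrac{L}{2}(\Gamma^{(k)})^2\|\bv^{(k)}-\bx^{(k)}\|^2\le(1-\Gamma^{(k)})h^{(k)}+O(1/k^2),
\]
using $\Gamma^{(k)}=O(1/k)$ and $\|\bv^{(k)}-\bx^{(k)}\|\le\diam(\mD)+O(1/k)$. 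Since $\Gamma^{(k)}=\gamma(k)+O(1/k^2)$ is $\Theta(1/k)$, the standard induction $h^{(k)}\le C/k$ closes the recursion and gives $h^{(k)}=O(1/k)$.

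I expect the main obstacle to be the bookkeeping around the intermediate stages, not any new idea: carefully proving the $O(1/k)$ stage bounds, establishing feasibility (or controlled near-feasibility) of the $y_i$ and of $\bx^{(k+1)}$, and verifying the signs of the $\mu_i$ -- the single place where the assumed structure of the method ($\sum_i\beta_i=1$, $\beta_i\ge0$, and strictly lower triangular $A$) is genuinely needed. Everything past that point is a routine perturbation of the classical \FW{} argument, with all specifically RK contributions confined to order $O(1/k^2)$.
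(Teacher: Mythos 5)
Your proposal is correct and follows essentially the same route as the paper's proof (a one-step lemma plus the standard Jaggi-style induction): write the RK update as a perturbed Frank--Wolfe step, use $L$-smoothness to fold the $O(1/k)$ stage displacements $\|y_i-\bx^{(k)}\|$ into an $O((\gamma^{(k)})^2)$ remainder, and close the recursion $h^{(k+1)}\le(1-\Theta(\gamma^{(k)}))\,h^{(k)}+O((\gamma^{(k)})^2)$ by induction. The only cosmetic difference is that you transfer gradients between the stage points and $\bx^{(k)}$ and compare the LMO output directly against $\bx^*$, whereas the paper works with the duality gap at the perturbed stage points and transfers function values back to $\bx^{(k)}$ via a Lipschitz constant of $f$; both arguments rest on the same structural hypotheses ($\beta_i\ge 0$, $\sum_i\beta_i=1$, strictly lower triangular $A$, and feasibility of the iterates).
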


\begin{proposition}[Negative result]
Under mild conditions, 
 regardless of the order $p$ and choice of $A$, $\beta$, and $\omega$, 
 the worst best case bound for FW-RK, for any RK method, is of order $O(1/k)$.
\label{prop:rungekutta-negative}
\end{proposition}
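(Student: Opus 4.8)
The plan is to construct an explicit hard instance — a scalar Huber-type problem on $[-1,1]$ — on which \emph{every} explicit RK discretization of \FWflow~with step size $\gamma_k = O(1/k)$ is forced into $\Omega(1/k)$ behavior, matching the upper bound of Proposition~\ref{prop:rungekutta-positive}. The commented-out toy problem in the excerpt is the natural candidate: take $f$ equal to $\bx^2/2$ for $|\bx|<\varepsilon$ and linear with slope $\pm\varepsilon$ outside, so that $f$ is $L$-smooth and $1$-Lipschitz independent of $\varepsilon$, $f^\ast = -\varepsilon^2/2$ attained at $0$, and crucially $\lmo_{[-1,1]}(\nabla f(\bx)) = -\sign(\bx)$ whenever $\bx\neq 0$. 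The point is that on this instance the LMO direction is \emph{piecewise constant in the iterate}, so every internal RK stage $\xi_i = \dot x(k+\omega_i, \cdot) = \gamma(k+\omega_i)(s - (\bx^{(k)}+\sum_j A_{ij}\xi_j))$ returns the \emph{same} vertex $s = -\sign(\bx^{(k)})$ as long as no stage value crosses the origin. I would first establish this ``sign-stability'' of all stages within one step, using that $\gamma_k = O(1/k)$ makes the stage perturbations $\sum_j A_{ij}\xi_j$ of size $O(1/k)$, hence eventually small relative to the distance of $\bx^{(k)}$ from $0$ once the iterate settles away from $0$ (or, if it hovers near $0$, small relative to $\varepsilon$ in the quadratic region — either branch is handled the same way).

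Given sign-stability, the entire RK update collapses to a scalar affine recursion: $\bx^{(k+1)} = \bx^{(k)} + \big(\sum_i \beta_i\big)\,\gamma_k\big(s-\bx^{(k)}\big) + (\text{correction terms of size } O(\gamma_k^2))$, and since $\sum_i \beta_i = 1$ by the consistency constraint, the leading-order update is \emph{exactly} the vanilla \FW~update $\bx^{(k+1)} = (1-\gamma_k)\bx^{(k)} + \gamma_k s$ plus an $O(\gamma_k^2)$ perturbation. The second step is then to lower-bound $f(\bx^{(k)}) - f^\ast$ for this perturbed recursion. Here I would argue as in the classical FW lower bound: with $\gamma_k = c/(c+k)$, the iterate cannot contract toward the optimal face faster than the step sizes allow — writing $a_k = |\bx^{(k)}|$ in the linear region, $a_{k+1} \geq (1-\gamma_k)a_k - C\gamma_k^2$, and since $\sum \gamma_k = \infty$ but the step sizes shrink like $1/k$, one shows $a_k = \Omega(1/k)$, whence $f(\bx^{(k)}) - f^\ast = \varepsilon a_k - \varepsilon^2/2 \cdot(\text{stuff})\dots$ — more carefully, in the linear region $f(\bx) - f^\ast = \varepsilon(|\bx| \cdot(\dots))$; one needs $\varepsilon$ chosen small enough that the iterates stay in the linear region for all large $k$, so that $f(\bx^{(k)})-f^\ast$ is genuinely $\Theta(a_k) = \Omega(1/k)$. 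The ``mild conditions'' in the statement presumably refer exactly to this: $\gamma_k = \Theta(1/k)$ and a nondegeneracy assumption on the RK tableau (e.g. the $O(\gamma_k^2)$ correction has a bounded coefficient, automatic since $A,\beta,\omega$ are fixed constants).

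I would carry out the steps in this order: (1) fix the instance and record the smoothness/Lipschitz/LMO facts; (2) prove the stage sign-stability lemma for all sufficiently large $k$, splitting into the ``iterate bounded away from $0$'' and ``iterate near $0$'' cases; (3) reduce the RK step to the scalar affine recursion $\bx^{(k+1)} = (1-\gamma_k)\bx^{(k)} + \gamma_k s + O(\gamma_k^2)$ using $\sum_i\beta_i = 1$; (4) choose $\varepsilon$ so the tail of the trajectory lives in the linear region and derive $a_k = \Omega(1/k)$ from the recursion; (5) conclude $f(\bx^{(k)}) - f^\ast = \Omega(1/k)$, which combined with Proposition~\ref{prop:rungekutta-positive} pins the worst-case rate at $\Theta(1/k)$. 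The main obstacle I anticipate is step (2)/(4) interplay: one must rule out the possibility that a clever RK tableau uses its internal stages to ``probe across'' the kink at $0$ and thereby gain curvature information — this is where strict lower-triangularity of $A$ and the $O(1/k)$ step size are essential, since they force every probe to be within $O(1/k)$ of $\bx^{(k)}$, too small to cross a kink that the iterate itself approaches only at rate $1/k$. Making this ``the iterate and its probes approach the kink at the same $1/k$ rate, so the probes never cross'' argument fully rigorous — i.e., showing the two $O(1/k)$ quantities can be ordered with the right constants for all large $k$ — is the delicate part of the proof.
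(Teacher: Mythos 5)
Your overall strategy --- the scalar Huber/sign-LMO instance on $[-1,1]$, reduce the RK step to a scalar recursion, and show the iterate cannot beat $\Omega(1/k)$ --- is the same as the paper's, but the key lemma you propose (stage sign-stability) is not the one the paper uses, and it has a genuine hole exactly where you flag it. In the regime that matters, $|\bx^{(k)}|=\Theta(1/k)$ (this must happen, by the positive result), while the internal stage perturbations $\sum_j A_{ij}\xi_j$ are also $\Theta(1/k)$ with constants set by the tableau $A$ --- the two quantities are of the \emph{same} order, so for a general tableau you cannot order their constants so that no stage crosses the origin; indeed the iterate itself overshoots and flips sign essentially every step (that is the zig-zag). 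The paper avoids this entirely: instead of sign-stability it proves that each stage satisfies $|\xi_i| = \tfrac{c}{c+k} + O(1/k^2)$ \emph{regardless of sign}, and then observes that the net step $\sum_i\beta_i\xi_i \approx \gamma_k\bigl(\sum_{i\in S_1}\beta_i - \sum_{j\in S_2}\beta_j\bigr)$ for some sign-partition $S_1\cup S_2$ of the stages. This is where the ``mild conditions'' actually live: one must assume $\min_{S_1,S_2}\bigl|\sum_{i\in S_1}\beta_i-\sum_{j\in S_2}\beta_j\bigr|>0$ (no subset of the $\beta_i$ sums to $1/2$), otherwise a special tableau could cancel the stages and the lower bound fails. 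Your sign-stable picture makes this condition invisible, so your proof cannot cover ``any RK method'' as claimed.

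Separately, your step (4) recursion is mis-stated: $a_{k+1}\geq(1-\gamma_k)a_k - C\gamma_k^2$ is satisfied by $a_k\equiv 0$ and yields no lower bound. The correct mechanism is the overshoot: because the LMO vertex is at distance $\Theta(1)$ from the iterate, the step magnitude is bounded \emph{below} by $\Omega(\gamma_k)$, so $|\bx^{(k+1)}| = \bigl|\,|\bx^{(k)}|(1-\gamma_k) - \gamma_k + O(\gamma_k^2)\bigr|$, and if $|\bx^{(k)}|$ ever drops well below $\gamma_k$ the next iterate is kicked back up to $\approx\gamma_k$. The paper isolates this as a standalone lemma (the absolute-value recursion $\bx^{(k+1)}=|\bx^{(k)}-c_k/k|$ with $c_k$ not decaying implies $\sup_{k'\geq k}|\bx^{(k')}|=\Omega(1/k)$), and note the conclusion is a lower bound on the \emph{supremum over the tail}, not on every iterate --- a weaker but sufficient statement that your monotone-contraction framing does not capture. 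To repair your proof you would need to (i) drop sign-stability in favor of a two-case analysis (stages agreeing vs.\ disagreeing with $\sign(\bx^{(k)})$), (ii) add the $\beta$-partition nondegeneracy assumption, and (iii) replace the contraction inequality with the overshoot/absolute-value argument.
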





\section{A better search direction}
Though multistep methods do not seem to improve the rate of convergence, it does improve the quality of the search \emph{direction}.
We leverage this in two ways. 
First, we consider more aggressive line searches, e.g. replacing $\gamma^{(k)}$ with $\max\{\frac{2}{2+k},\bar \gamma\}$ and
\[
\bar\gamma = \max_{0\leq \gamma\leq 1}\{\gamma : f(\bx^{(k)} + \gamma^{(k)} \bd^{(k)}) \leq f(\bx^{(k)})\}.
\]
Note that this is not the typical line search as in \citep{lacoste2015global}, which forces $\gamma^{(k)}$ to be upper bounded by $O(1/k)$--we are hoping for more aggressive, not less, step sizes. 
Second, we follow the scheme presented in \citet{li2020does} which generalizes the 3-variable Nesterov acceleration  \citep{nesterov2003introductory} from gradient descent to Frank-Wolfe. 
Fig. \ref{fig:triangle_momentum}, rows 2 and 3, illustrate the benefits of multistep methods for line search (row 2) and momentum (row 3), over the toy triangle problem. 

\begin{figure*}[!htb]
   \centering
  \includegraphics[height=.8in,trim={2cm 0 2cm 0},clip]{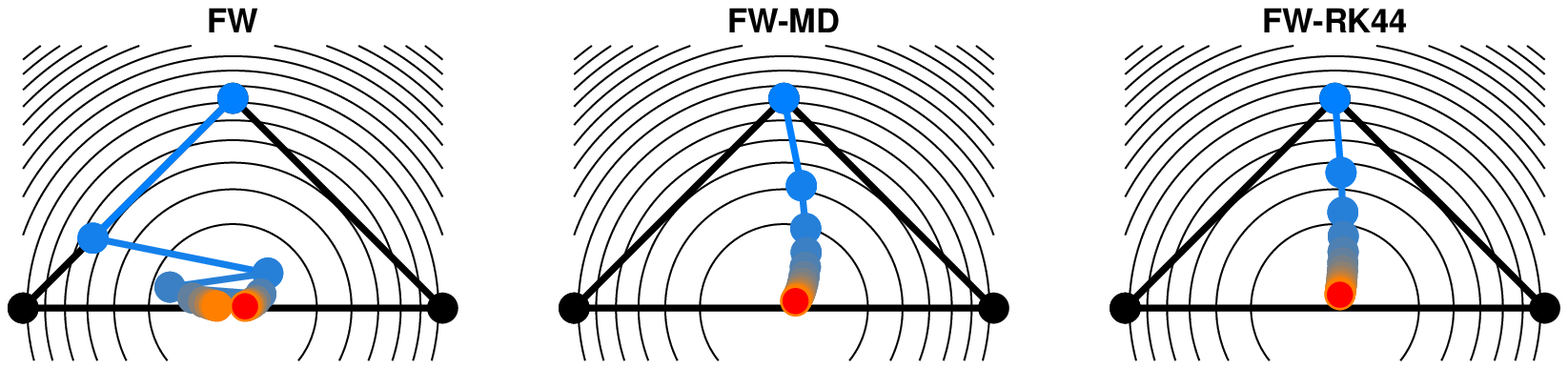}
  \includegraphics[height=.8in]{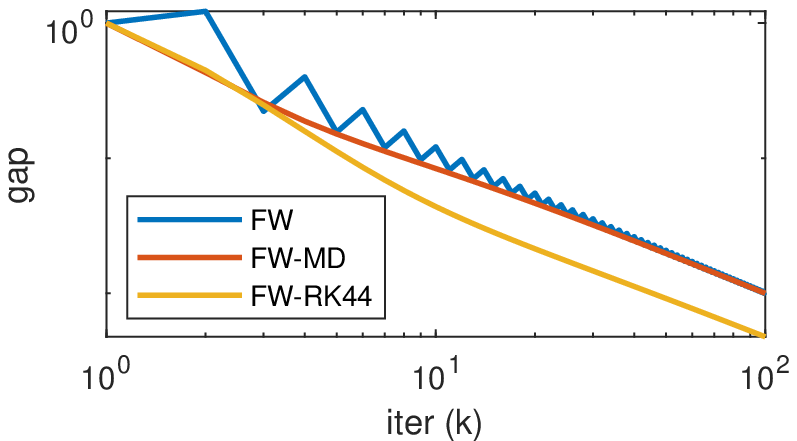}\\

   \centering
  \includegraphics[height=.8in,trim={2cm 0 2cm 0},clip]{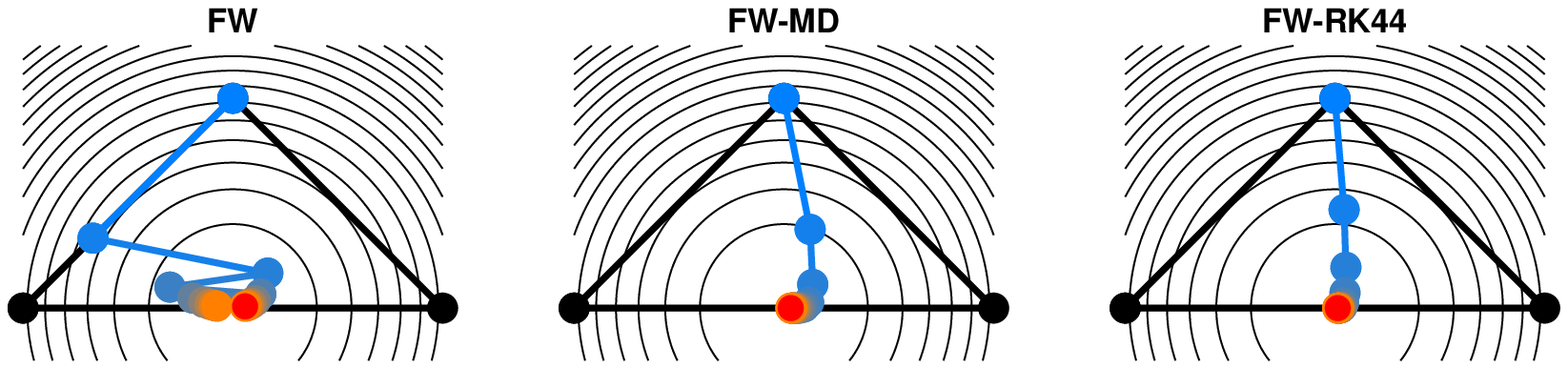}
  \includegraphics[height=.8in]{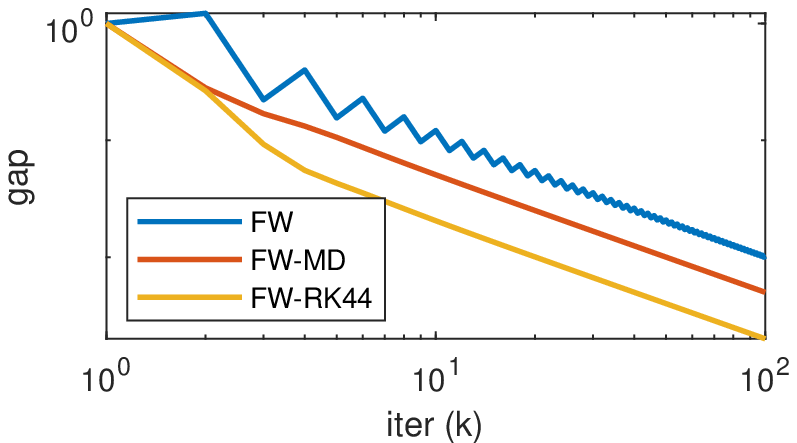}

  \includegraphics[height=.8in,trim={2cm 0 2cm 0},clip]{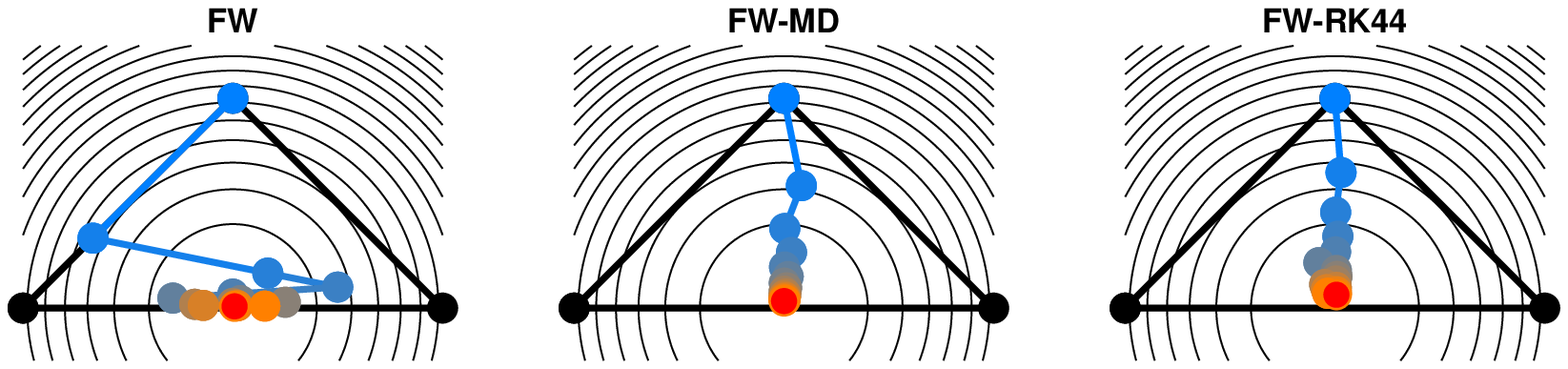}
  \includegraphics[height=.8in]{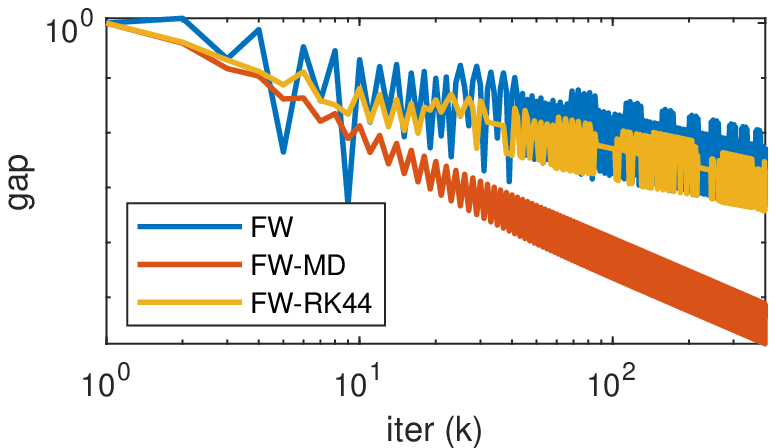}
 \caption{\textbf{Triangle toy problem.}
  \textit{Top.} Straight implementation. \textit{Middle:}  Line search. \textit{Bottom:} Momentum acceleration.}
 \label{fig:triangle_momentum}
\end{figure*}

Finally, we evaluate the benefit of our multistep Frank-Wolfe methods on   sparse logistic regression on Gisette \citep{guyon2004result} (Figure \ref{fig:matfact}, left)
   and  Nuclear-norm constrained Huber regression on  Movielens 100K dataset \citep{harper2015movielens}
\footnote{MovieLens dataset is available at \url{https://grouplens.org/datasets/movielens/100k/}} (Figure \ref{fig:matfact}, right).

\begin{figure}
\centering
  \includegraphics[width=.45\linewidth]{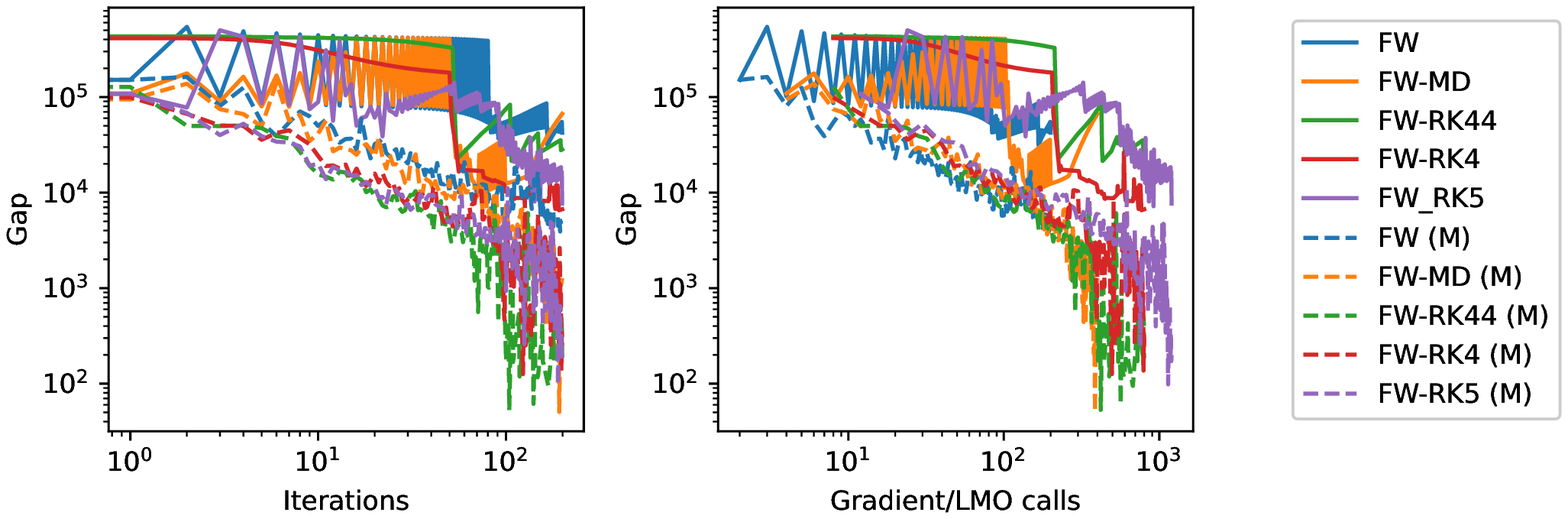}
  \includegraphics[width=.45\linewidth]{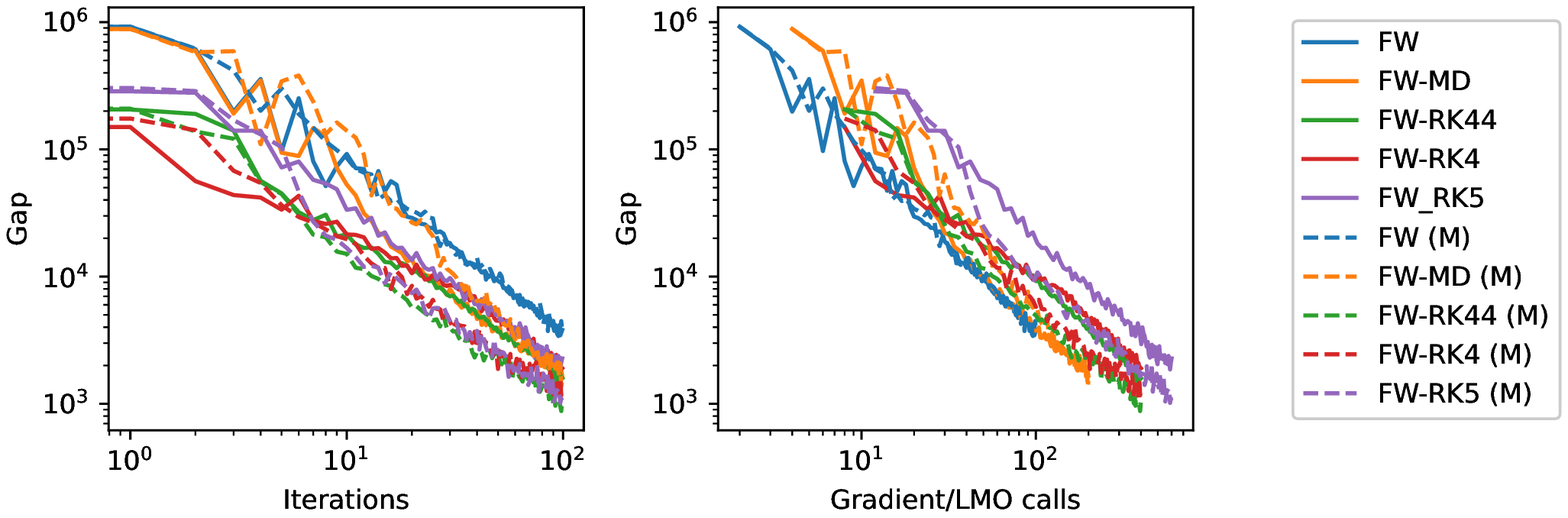}
  \caption{\textbf{Left}: Gisette. \textbf{Right}:Movielens. 
  }
  \label{fig:matfact}
\end{figure}





\newpage

\bibliographystyle{plainnat}
\bibliography{main.bbl}

\begin{thebibliography}{11}
\providecommand{\natexlab}[1]{#1}
\providecommand{\url}[1]{\texttt{#1}}
\expandafter\ifx\csname urlstyle\endcsname\relax
  \providecommand{\doi}[1]{doi: #1}\else
  \providecommand{\doi}{doi: \begingroup \urlstyle{rm}\Url}\fi

\bibitem[Canon and Cullum(1968)]{Canon1968ATU}
M.~Canon and C.~Cullum.
\newblock A tight upper bound on the rate of convergence of {F}rank-{W}olfe
  algorithm.
\newblock \emph{Siam Journal on Control}, 6:\penalty0 509--516, 1968.

\bibitem[Diakonikolas and Orecchia(2019)]{diakonikolas2019approximate}
Jelena Diakonikolas and Lorenzo Orecchia.
\newblock The approximate duality gap technique: A unified theory of
  first-order methods.
\newblock \emph{SIAM Journal on Optimization}, 29\penalty0 (1):\penalty0
  660--689, 2019.

\bibitem[Freund and Grigas(2016)]{freund2016new}
Robert~M Freund and Paul Grigas.
\newblock New analysis and results for the {F}rank--{W}olfe method.
\newblock \emph{Mathematical Programming}, 155\penalty0 (1-2):\penalty0
  199--230, 2016.

\bibitem[Garber and Hazan(2015)]{garber2015faster}
Dan Garber and Elad Hazan.
\newblock Faster rates for the {F}rank-{W}olfe method over strongly-convex
  sets.
\newblock In \emph{International Conference on Machine Learning}, pages
  541--549. PMLR, 2015.

\bibitem[Guyon et~al.(2004)Guyon, Gunn, Ben-Hur, and Dror]{guyon2004result}
Isabelle Guyon, Steve~R Gunn, Asa Ben-Hur, and Gideon Dror.
\newblock Result analysis of the {NIPS} 2003 feature selection challenge.
\newblock In \emph{NIPS}, volume~4, pages 545--552, 2004.

\bibitem[Harper and Konstan(2015)]{harper2015movielens}
F~Maxwell Harper and Joseph~A Konstan.
\newblock The {M}ovie{L}ens datasets: History and context.
\newblock \emph{Acm transactions on interactive intelligent systems (tiis)},
  5\penalty0 (4):\penalty0 1--19, 2015.

\bibitem[Jacimovic and Geary(1999)]{jacimovic1999continuous}
Milojica Jacimovic and Andjelija Geary.
\newblock A continuous conditional gradient method.
\newblock \emph{Yugoslav journal of operations research}, 9\penalty0
  (2):\penalty0 169--182, 1999.

\bibitem[Jaggi(2013)]{jaggi2013revisiting}
Martin Jaggi.
\newblock Revisiting {F}rank-{W}olfe: Projection-free sparse convex
  optimization.
\newblock In \emph{International Conference on Machine Learning}, pages
  427--435. PMLR, 2013.

\bibitem[Lacoste-Julien and Jaggi(2015)]{lacoste2015global}
Simon Lacoste-Julien and Martin Jaggi.
\newblock On the global linear convergence of frank-wolfe optimization
  variants.
\newblock \emph{Advances in neural information processing systems}, 28, 2015.

\bibitem[Li et~al.(2021)Li, Coutiño, B.~Giannakis, and Leus]{li2020does}
Bingcong Li, Mario Coutiño, Georgios B.~Giannakis, and Geert Leus.
\newblock A momentum-guided frank-wolfe algorithm.
\newblock \emph{IEEE Transactions on Signal Processing}, 69:\penalty0
  3597--3611, 2021.
\newblock \doi{10.1109/TSP.2021.3087910}.

\bibitem[Nesterov(2003)]{nesterov2003introductory}
Yurii Nesterov.
\newblock \emph{Introductory lectures on convex optimization: A basic course},
  volume~87.
\newblock Springer Science \& Business Media, 2003.

\end{thebibliography}

\newpage
\appendix
\onecolumn

\section{Runge Kutta methods}
\label{app:sec:rk}


\begin{itemize}
    \item Midpoint method
    \[
    A = \begin{bmatrix}
    0 & 0 \\ 1/2 & 0
    \end{bmatrix},
    \qquad
    \beta = \begin{bmatrix}
    0 \\1 
    \end{bmatrix},
    \qquad
    \omega = \begin{bmatrix}
    0\\1/2
    \end{bmatrix},\qquad
    \bz^{(1)} \approx \begin{bmatrix}
    -0.3810\\1.1429
    \end{bmatrix},\qquad
    \bz^{(2)} \approx \begin{bmatrix}
    -0.2222\\0.8889
    \end{bmatrix}
\]
     \item Runge Kutta 4th Order Tableau (44)
    \[
    A = \begin{bmatrix}
   0    & 0    & 0 & 0 \\
     1/2  & 0    & 0 & 0 \\
     0  & 1/2  & 0 & 0 \\
    0    & 0    & 1 & 0 
    \end{bmatrix},
    \qquad
    \beta = \begin{bmatrix}
  1/6   \\ 1/3  \\ 1/3 \\ 1/6
    \end{bmatrix},
    \qquad
    \omega = \begin{bmatrix}
    0   \\
        1/2 \\
        1/2 \\
        1  
    \end{bmatrix},\qquad
    \bz^{(1)} \approx \begin{bmatrix}
    0.2449\\
    0.5986\\
    0.5714\\
    0.3333
    \end{bmatrix}
    \]
    
        \item Runge Kutta 3/8 Rule Tableau (4)
    \[
    A = \begin{bmatrix}
   0    & 0    & 0 & 0 \\
     1/3  & 0    & 0 & 0 \\
     -1/3  & 1  & 0 & 0 \\
     1    & -1    & 1 & 0 \\
    \end{bmatrix},
    \qquad
    \beta = \begin{bmatrix}
 1/8   \\3/8  \\ 3/8 \\ 1/8
    \end{bmatrix},
    \qquad
    \omega = \begin{bmatrix}
   0    \\
        1/3  \\
        2/3 \\
        1  
    \end{bmatrix},\qquad
    \bz^{(1)} \approx \begin{bmatrix}
    0.1758\\
    0.6409\\
    0.6818\\
    0.2500
    \end{bmatrix}
    \]
\item 
Runge Kutta 5 Tableau
\[
    A = \begin{bmatrix}
 0 & 0 & 0 & 0 & 0 & 0\\
 1/4 & 0 & 0 & 0 & 0 & 0\\
  1/8 & 1/8 & 0 & 0 & 0 & 0\\
 0 & -1/2 & 1 & 0 & 0 & 0\\
 3/16 & 0 & 0 & 9/16 & 0 & 0\\
 -3/7 & 2/7 & 12/7 & -12/7 & 8/7 & 0\\
    \end{bmatrix},
    \qquad
    \beta = \begin{bmatrix}
7/90 \\ 0 \\ 32/90 \\ 12/90 \\ 32/90 \\7/90
    \end{bmatrix},
    \qquad
    \omega = \begin{bmatrix}
    0 \\
        1/4 \\
        1/4 \\
        1/2 \\
        3/4\\
        1 
    \end{bmatrix},\qquad
    \bz^{(1)} \approx \begin{bmatrix}
      0.1821\\
    0.0068\\
    0.8416\\
    0.3657\\
    0.9956\\
    0.2333
    \end{bmatrix}
    \]
\end{itemize}
In all examples, $\|\bz^{(k)}\|_\infty$ monotonically decays with $k$.

\section{Continuous Time Frank Wolfe Convergence Rate}
Proof of Prop. \ref{prop:fwflow-rate}
\begin{proof}
\footnote{Much of this proof is standard analysis for continuous time Frank-Wolfe, and is also presented in \cite{jacimovic1999continuous}. }
Note that by construction of $\nabla f(x)^Ts = \displaystyle\min_{y\in\mD}\, \nabla f(x)^Ty$, and since $f$ is convex, 
\[
f(x) - f(x^*) \leq \nabla f(x)^T(x-x^*) \leq \nabla f(x)^T(x-s).
\] 
Quantifying the objective value error as $\mE(t) = f(x(t))-f^*$ 
(where $f^* = \min_{x\in \mD} f(x)$ is attainable) then 
\begin{eqnarray*}
\dot \mE(t) &=& \nabla f(x(t))^T\dot x(t) \\
&\overset{\FWflow}{=}& \gamma(t) \nabla f(x(t))^T(s(t)-x(t)).
\end{eqnarray*}
Therefore, 
\begin{eqnarray*}
\dot \mE(t) &=& -\gamma(t) \underbrace{\nabla f(x(t))^T(x(t)-s(t))}_{\geq f(x)-f(x^*)}\\
&\leq& -\gamma(t) \mE(t) 
\end{eqnarray*}
giving an upper rate of
\[
\mE(t) \leq \mE(0) e^{-\int_0^t\gamma(\tau)d\tau}.
\]
In particular, picking the ``usual step size sequence"  $\gamma(t) = \tfrac{c}{t+c}$ gives the proposed rate \eqref{eq:cont-upper-rate}.
\end{proof}
\section{Feasiblity}
\label{app:sec:feas}

  \begin{proposition}[Feasiblity]
Consider a  $q$-stage multistep FW method
 defined by $A$, $\beta$, and $\omega$. For each given $k\geq 1$, define
  \[
 \bar \gamma_i^{(k)} = \frac{c}{c+k+\omega_i}, \qquad 
 \Gamma^{(k)} = \diag({\bar \gamma^{(k)}}_i),
 \]
 \[
 \mathbf P^{(k)} = \Gamma^{(k)} (I+A^T\Gamma^{(k)})^{-1}, \qquad \mathbf z^{(k)} = q \mathbf P^{(k)}\beta.
\]
Then if $0\leq \mathbf z^{(k)}\leq 1$ for all $k\geq 1$, then 
\[
\bx^{(0)}\in \mD\Rightarrow \bx^{(k)}\in \mD, \quad \forall k\geq 1.
\]
  \end{proposition}
  \begin{proof}

  For a given $k$, construct additionally
  \[
\mathbf Z = \begin{bmatrix} \xi_1 & \xi_2 & \cdots & \xi_q \end{bmatrix},
\]
\[
\bar{\mathbf X} = \begin{bmatrix} \bar \bx_1 & \bar \bx_2 & \cdots & \bar \bx_q \end{bmatrix},\quad
\bar{\mathbf S} = \begin{bmatrix} \bar \bs_1 & \bar \bs_2 & \cdots & \bar \bs_q \end{bmatrix}.
\]
where
      \begin{eqnarray*}
 {\bar \bx}_i &=& \bx^{(k)}+  \sum_{j=1}^q A_{ij} \xi_j, \\
 {\bar \bs}_i &=& \lmo({\bar \bx}_i).
\end{eqnarray*}
  Then we can rewrite \eqref{eq:general_discrete} as
  \begin{eqnarray*}
  \mathbf Z &=& (\bar {\mathbf S} - \bar{\mathbf X})\Gamma = (\bar {\mathbf S} - \bx^{(k)}\mathbf 1^T - \mathbf ZA^T)\Gamma \\
  &=& (\bar {\mathbf S} - \bx^{(k)}\mathbf 1^T )\mathbf P
  \end{eqnarray*}
  for shorthand  $\mathbf P = \mathbf P^{(k)}$ and $\Gamma=\Gamma^{(k)}$. 
 Then
 \begin{eqnarray*}
\bx^{(k+1)} 
&=& \bx^{(k)}(1-\mathbf 1^T \mathbf P\beta) + \bar {\mathbf S} \mathbf P\beta\\
&=&\frac{1}{q}\sum_{i=1}^q  \underbrace{(1-\bz^{(k)}_i)\bx^{(k)} + \bz^{(k)}_i\bar \bs_i }_{\hat\xi_i}
  \end{eqnarray*}
  where $\bz^{(k)}_i$ is the $i$th element of $\bz^{(k)}$, and $\beta = (\beta_1,...,\beta_q)$. 
Then if $0\leq \bz^{(k)}_i\leq 1$, then $\hat \xi_i$ is a convex combination of $\bx^{(k)}$ and $\bar \bs_i$, and $\hat \xi_i\in \mD$ if $\bx^{(k)}\in \mD$. Moreover, $\bx^{(k+1)}$ is an average of $\hat\xi_i$, and thus $\bx^{(k+1)}\in \mD$. Thus we have recursively shown that $\bx^{(k)}\in \mD$ for all $k$.
  \end{proof}

\section{Positive Runge-Kutta convergence result}
\label{app:sec:positiveresults}

\begin{lemma} 
After one step, the generalized Runge-Kutta method satisfies
\[
h(\bx^{(k+1)})-h(\bx^{(k)}) \leq
-\gamma^{(k+1)} h(\bx^{(k)}) + D_4(\gamma^{(k+1)})^2\]

where $h(\bx) = f(\bx) - f(\bx^*)$ and


\[
D_4 =\frac{LD_2^2+2LD_2D_3+2D_3}{2},\quad D_2 = c_1 D, \quad D_3 = c_2c_1 D, \quad c_1 = qp_{\max}, \quad c_2 = q \max_{ij} |A_{ij}|, \quad D = \diam(\mD).
\]

\label{lem:rungekutta-positive-onestep}
\end{lemma}
\begin{proof}
For ease of notation, we write $\bx = \bx^{(k)}$ and $\bx^+ = \bx^{(k+1)}$. We will use $\gamma=\gamma^{(k)} = \tfrac{c}{c+k}$, and $\bar \gamma_i = \tfrac{c}{c+k+\omega_i}$.
Now consider the generalized RK method
\begin{eqnarray*}
\bar \bx_i &=& \bx + \sum_{j=1}^q A_{ij} \xi_j\\
\xi_i &=& \underbrace{\frac{c}{c+k+\omega_i}}_{\tilde \gamma_i} ( \bs_i - \bar {\bx}_i )\\
\bx^+&=&  \bx + \sum_{i=1}^q \beta_i \xi_i\\
\end{eqnarray*}
where $\bs_i = \lmo(\bar \bx_i)$.


Define $D = \diam(\mD)$. 
We use the notation from  section
\ref{sec:rkmethod}. 
Denote the 2,$\infty$-norm as
\[
\|A\|_{2,\infty} = \max_j \|a_j\|_2
\]
where $a_j$ is the $j$th column of $A$. Note that all the element-wise elements in 
\[
\mathbf P^{(k)} = \Gamma^{(k)}(I+A^T\Gamma^{(k)})^{-1}
\]
is a decaying function of $k$, and thus defining $p_{\max} = \|\mathbf P^{(1)}\|_{2,\infty}$
we see that
\[
\|\bar {\mathbf Z}\|_{2,\infty} = \|(\bar {\mathbf S} - \bx^{(k)}\mathbf 1)\mathbf P^{(k)}\|_{2,\infty} \leq qp_{\max} D.
\]

Therefore, since $\bar {\mathbf Z} = (\bar {\mathbf S}-\bar {\mathbf X})\Gamma$, and all the diagonal elements of $\Gamma$ are at most 1, 
\[
\|\bs_i-\bar \bx_i\|_2  \leq qp_{\max} D =: D_2
\]
and
\[
\|\bx-\bar \bx_i\|_2 = \|\sum_{j=1}^q A_{ij} \gamma_j (\bs_j-\bar \bx_j)\|_2 \leq q \max_{ij} |A_{ij}| \gamma D_2 =: D_3 \gamma.
\]

Then 
\begin{eqnarray*}
f(\bx^+)-f(\bx) &\leq&  \nabla f(\bx)^T(\bx^+-\bx) + \frac{L}{2}\|\bx^+-\bx\|_2^2\\
&=&  \sum_i \beta_i \tilde \gamma_i \nabla f(\bx)^T(\bs_i-\bar \bx_i) + \frac{L}{2}\underbrace{\|\sum_i \beta_i \tilde \gamma_i (\bs_i-\bar \bx_i)\|_2^2}_{\leq \gamma^2 D_2^2}\\
&=&  \sum_i \beta_i \tilde \gamma_i (\nabla f(\bx)-\nabla f(\bar \bx_i))^T(\bs_i-\bar \bx_i) +
 \sum_i \beta_i \tilde \gamma_i \underbrace{\nabla f(\bar \bx_i)^T(\bs_i-\bar \bx_i)}_{-\gap(\bar \bx_i)} +
 \frac{L\gamma^2D_2^2}{2}\\
 &\leq& \sum_i \beta_i\underbrace{ \tilde \gamma_i}_{\leq \gamma} \underbrace{\|\nabla f(\bx)-\nabla f(\bar \bx_i)\|_2}_{L\|\bx-\bar \bx_i\|_2=L\gamma D_3}\underbrace{\|\bs_i-\bar \bx_i\|_2}_{\leq D_2} - \sum_i \beta_i\tilde\gamma_i \gap(\bar \bx_i) + \frac{L\gamma^2 D_2^2}{2}\\
 &\leq &  -\sum_i \beta_i\tilde\gamma_i \gap(\bar \bx_i) + \frac{L\gamma^2 D_2^2}{2} + \frac{2L\gamma^2 D_2D_3}{2}\\ 
 &\leq& -\gamma^+ \sum_i\beta_i h(\bar \bx_i)  + \frac{L\gamma^2D_2(D_2+2D_3)}{2} 
\end{eqnarray*}
where $\gamma=\gamma_k$, and $\gamma^+=\gamma_{k+1}$. Now assume $f$ is also $L_2$-continuous, e.g. $|f(\bx_1)-f(\bx_2)|\leq L_2\|\bx_1-\bx_2\|_2$. Then, taking  $h(\bx) = f(\bx) -f(\bx^*)$,

\begin{eqnarray*}
h(\bx^+)-h(\bx) 
 &\leq& -\gamma^+ \sum_i\beta_i (h(\bar \bx_i)-h(\bx)) -\gamma^+ \underbrace{\sum_i\beta_i}_{=1} h(\bx) + \frac{L\gamma^2D_2(D_2+2D_3)}{2}\\
 &\leq & 
 \gamma \sum_i\beta_iL_2 \underbrace{\|\bar \bx_i-\bx\|_2}_{\leq \gamma D_3}-\gamma^+ h(\bx) + \frac{L\gamma^2D_2(D_2+2D_3)}{2}\\
 &\leq & -\gamma^+ h(\bx) + \frac{\gamma^2(LD_2^2+2LD_2D_3+2D_3)}{2}\\
 &\leq& -\gamma^+ h(\bx) + D_4(\gamma^+)^2
\end{eqnarray*}
where 
$D_4 =\frac{LD_2^2+2LD_2D_3+2D_3}{2}$and we use $2 \geq (\gamma/\gamma^+)^2$ for all $k \geq 1$.

\end{proof}

Proof of Prop. \ref{prop:rungekutta-positive}
\begin{proof}
After establishing Lemma \ref{lem:rungekutta-positive-onestep}, the rest of the proof is a recursive argument, almost identical to that in  \cite{jaggi2013revisiting}. 

At $k = 0$, we define $h_0 =\max\{ h(\bx^{(0)}), \frac{ D_4c^2}{c-1}\}$, 
and it is clear that $h(\bx^{(0)}) \leq  h_0$.

Now suppose that for some $k$, $h(\bx^{(k)}) \leq \frac{h_0}{k+1}$. Then
\begin{eqnarray*}
h(x_{k+1}) &\leq &  h(\bx_k) - \gamma_{k+1}h(\bx^{(k)}) + {D_4} \gamma_{k+1}^2\\
&\leq & \frac{h_0}{k+1}\cdot \frac{k+1}{c+k+1} + D_4 \frac{c^2}{(c+k+1)^2}\\
&=& \frac{h_0}{c+k+1} + D_4 \frac{c^2}{(c+k+1)^2}\\
&=& \left( h_0 + \frac{D_4c^2}{c+k+1}\right) \left(\frac{k+2}{c+k+1}\right) \frac{1}{k+2}
\\
&\leq& h_0\left( 1+\frac{c-1}{c+k+1}\right) \left(\frac{k+2}{c+k+1}\right) \frac{1}{k+2}\\
\\
&\leq& h_0\underbrace{\left( \frac{2c+ k }{c+k+1}\right) \left(\frac{k+2}{c+k+1}\right)}_{\leq 1} \frac{1}{k+2}.
\end{eqnarray*}
\end{proof}

\section{Negative Runge-Kutta convergence result}
\label{app:sec:negativeresults}

This section gives the proof for Proposition \ref{prop:rungekutta-negative}.

\begin{lemma}[$O(1/k)$ rate]\label{lem:o1krate}
Start with $\bx^{(0)} = 1$. Then consider the sequence defined by
\[
\bx^{(k+1)}= |\bx^{(k)} - \frac{c_k}{k}|
\]
where, no matter how large $k$ is, there exist some constant where  $C_1 < \max_{k'>k} c_{k'} $.
(That is, although $c_k$ can be anything, the smallest upper bound of $c_k$ does not decay.) Then
\[
\sup_{k'\geq k} |\bx^{(k')}| = \Omega(1/k).
\]
That is, the smallest upper bound of $|\bx^{(k)}|$ at least of order $1/k$.
\end{lemma}
\begin{proof}
We will show that the smallest upper bound of $|\bx^{(k)}|$ is larger than $C_1/(2k)$.

Proof by contradiction. 
Suppose that at some point $K$, for all $k \geq K$,  $|\bx^{(k)}| < C_1/(2k)$. Then from that point forward, 
\[
\sign(\bx^{(k)}-\frac{c_k}{k}) = -\sign(\bx^{(k)})
\]
and there exists some $k' > k$ where $c_{k'} > C_1$. Therefore, at that point,
\[
|\bx^{(k'+1)}| = \frac{c_{k'}}{k'}-|\bx^{(k')}| 
\geq \frac{C_1}{2k'}>\frac{C_1}{2(k'+1)}.
\]
This immediately establishes a contradiction.
\end{proof}

Now define the operator 
\[
T(\bx^{(k)}) = \bx^{(k+1)}-\bx^{(k)}
\]
and note that 
\[
|\bx^{(k+1)}| = |\bx^{(k)}+T(\bx^{(k)})| = | |\bx_k|+\sign(\bx^{(k)})T(\bx^{(k)})|.
\]
Thus, if we can show that there exist some $\epsilon$, agnostic to $k$ (but possibly related to Runge Kutta design parameters), and
\begin{equation}
\exists k'\geq k, \quad -\sign(\bx^{(k')})T(\bx^{(k')}) > \frac{\epsilon}{k'},\quad \forall k,
\label{eq:lemma-helper-1}
\end{equation}
 then based on the previous lemma, this shows $\sup_{k'>k}|\bx_{k'}| = \Omega (1/k)$ as the smallest possible upper bound.

\begin{lemma}
Assuming that $0 <q\mb P^{(k)} \beta < 1$ then there exists a finite point $\tilde k$ where for all $k > \tilde k$, 
\[
|\bx^{(k)}| \leq \frac{C_2}{k}
\]
for some $C_2 \geq 0$.
\end{lemma}
\begin{proof}

We again use the block matrix notation
\[
\bZ^{(k)} = (\bar \bS-\bx^{(k)}\mb 1^T) \Gamma^{(k)}(I+A^T\Gamma^{(k)})^{-1}
\]
where $\Gamma^{(k)} = \diag(\tilde \gamma_i^{(k)})$ and each element $\tilde \gamma_i^{(k)} \leq \gamma^{(k)}$.

First, note that by construction, since 
\[
\|\bar \bS-\bx^{(k)}\mb 1^T\|_{2,\infty} \leq D_4, \quad \|(I+A^T\Gamma^{(k)})^{-1}\|_2 \leq  \|(I+A^T\Gamma^{(0)})^{-1}\|_2
\]
are bounded above by constants, then 
\[
\|\bZ^{(k)}\|_\infty \leq \frac{c}{c+k} C_1
\]
for $C_1 = D_4\|(I+A^T\Gamma^{(0)})^{-1}\|_2 $.

First find constants $C_3 $, $C_4$, and $\bar k$ such that
\begin{equation}
\frac{C_3}{k} \leq \mb 1^T \mathbf P^{(k)} \beta \leq \frac{C_4}{k}, \quad \forall k>\bar k,
\label{eq:boundx_helper}
\end{equation}
and such constants always exist, since
by assumption, there exists some $a_{\min} > 0$, $a_{\max}<1$ and some $k'$ where
\[
a_{\min} <q\mb P^{(k')} \beta < a_{\max} \Rightarrow \frac{a_{\min}}{q \gamma_{\max}} \leq (I+A^T\Gamma^{(k')})^{-1} \beta \leq \frac{a_{\max}}{q \gamma_{\min}}
\]
where 
\[
\gamma_{\min} = \min_i \frac{c}{c+k'+\omega^{(k')}_i}, \qquad \gamma_{\max} = \frac{c}{c+k'}.
\]
Additionally, for all $k > c+1$,
\[
\frac{c}{2k}\leq \frac{c}{c+k+1} \leq \Gamma^{(k)}_{ii} \leq \frac{c}{c+k} \leq \frac{c}{k}.
\]
Therefore taking
\[
C_3 = \frac{ca_{\min}}{2q \gamma_{\max} }, \qquad C_4 = \frac{c a_{\max}}{q\gamma_{\min}}, \qquad \bar k = \max\{k',c+1\}
\]
satisfies \eqref{eq:boundx_helper}.

Now define
\[
C_2 = \max\{|\bx^{(1)}|,4cq C_1 \|A\|_\infty, 4C_3, 4C_4\}.
\]
We will now inductively show that $|\bx^{(k)}|\leq \frac{C_2}{k}$. From the definition of $C_2$, we have the base case for $k = 1$:
\[
|\bx^{(1)}| \leq \frac{|\bx^{(1)}|}{1} \leq \frac{C_2}{k}.
\]
Now assume that $|\bx^{(k)}|\leq \frac{C_2}{k}$. Recall that
\[
\bx^{(k+1)} = \bx^{(k)}(1-\mb 1^T \mathbf P^{(k)} \beta) + \bar \bS\mathbf P^{(k)} \beta, \qquad \bar \bS = [\bar \bs_1,...,\bar\bs_q], \qquad \bs_i = -\sign(\bar \bx_i)
\]
and we denote the composite mixing term $\bar \gamma^{(k)} = \mb 1^T\mb P^{(k)} \beta$. 
We now look at two cases separately.
\begin{itemize}
    \item Suppose first that $\bar \bS = -\sign(\bx^{(k)}\mb 1^T)$, e.g. $\sign(\bar\bx_i) = \sign(\bx^{(k)})$ for all $i$. Then 
    \[
    \bar \bS \mb P^{(k)} \beta = -\sign(\bx^{(k)})\bar \gamma_k,
    \]
    and 
    \begin{eqnarray*}
    |\bx^{(k+1)}| &=&  |\bx^{(k)}(1-\bar\gamma^{(k)}) + \bar \bS\mathbf P^{(k)} \beta|\\
    &=&  |\bx^{(k)}(1-\bar\gamma^{(k)}) -\sign(\bx^{(k)})\bar\gamma^{(k)}|\\
    &=&  |\underbrace{\sign(\bx^{(k)})\bx^{(k)}}_{|\bx^{(k)}|}(1-\bar\gamma^{(k)}) -\underbrace{\sign(\bx^{(k)})\sign(\bx^{(k)})}_{=1}\bar\gamma^{(k)}|\\
    &=&  | |\bx^{(k)}|(1-\bar\gamma^{(k)}) -\bar\gamma^{(k)}|\\
    &\leq&  \max\{ |\bx^{(k)}|(1-\bar\gamma^{(k)}) -\bar\gamma^{(k)},
    \bar\gamma^{(k)} - |\bx^{(k)}|(1-\bar\gamma^{(k)}) 
    \}\\
    &\leq&  \max\Bigg\{ \underbrace{\frac{C_2}{k}(1-\frac{C_3}{k}) -\frac{C_3}{k}}_{(*)},
    \frac{C_4}{k} \Bigg\}\\
    \end{eqnarray*}
    and when $k \geq \frac{C_2}{C_3} \iff C_3 \geq \frac{C_2}{k}$,
    \[
    (*) \leq C_2\left(\frac{1}{k} -\frac{1}{k^2}\right) \leq \frac{C_2}{k+1}.
    \]
    Taking also $C_4 \leq \frac{C_2}{4}$,
    \begin{eqnarray*}
    |\bx^{(k+1)}| 
    \leq  \max\left\{ \frac{C_2}{k+1},
    \frac{C_2}{4k} \right\} \leq \frac{C_2}{k+1}\\
    \end{eqnarray*}
    for all $k \geq 1$.
    
    \item Now suppose that there is some $i$ where $\bar\bs_i = \sign(\bx^{(k)}\mb1^T)$. 
    Now since 
\[
\bar \bS = -\sign(\bx^{(k)}\mb 1^T + \bZ A^T)
\]
then this implies that 
$ |\bx^{(k)}| < (\bZ A^T)_i$.
But since 
\[
|(\bZ A^T)_i| \leq \|\bZ\|_\infty \|A\|_\infty q \leq \frac{c}{c+k}(C_1\|A\|_\infty q) \leq \frac{C_2}{4k}, 
\]
this implies that
\[
|\bx^{(k+1)}| \leq \frac{C_2}{4k}(1-\frac{C_3}{k}) + \frac{C_2}{4k} \leq \frac{C_2}{2k}\leq \frac{C_2}{k+1}, \quad \forall k > 1.
\]
\end{itemize}
Thus we have shown the induction step, which completes the proof.
\end{proof}

\begin{lemma}

There exists a finite point $\tilde k$ where for all $k > \tilde k$, 
\[
 \frac{c}{c+k}-\frac{C_4}{k^2} < |\xi_i| < \frac{c}{c+k}+\frac{C_4}{k^2}
\]
for some constant $C_4>0$.
\end{lemma}
\begin{proof}
Our goal is to show that 
\[
\gamma^{(k)} - \frac{C_4}{k^2} \leq \|\bZ\|_\infty \leq  \gamma^{(k)} + \frac{C_4}{k^2}
\]
for some $C_4\geq 0$, and for all $k \geq k'$ for some $k' \geq 0$.
Using the Woodbury matrix identity,
\[
\Gamma(I+A^T\Gamma)^{-1} = \Gamma\left(I - A^T(I+\Gamma A^T)^{-1} \Gamma\right)
\]
and thus
\[
\bZ^{(k)} = \bar \bS\Gamma -\underbrace{\left(\bx^{(k)}\mb 1^T \Gamma + (\bar \bS-\bx^{(k)}\mb 1^T)\Gamma A^T(I+\Gamma A^T)^{-1}\Gamma\right)}_{\mathbf B}.
\]
and thus
\[
   |\bar \bs_i \tilde \gamma_i| - \frac{C_3}{k^2} \leq |\xi_i^{(k)}| \leq |\bar \bs_i \tilde \gamma_i| + \frac{C_3}{k^2}
\]
where via triangle inequalities and norm decompositions,
\[
\frac{C_3}{k^2} = \max_i |\mathbf B_i| \leq \underbrace{|\bx^{(k)}|}_{O(1/k)} \gamma_k + D_4 \gamma_k^2 \|A\|_\infty (I+\Gamma^{(0)}A^T)^{-1} = O(1/k^2).
\]
Finally, since $\bar \bs_i\in \{-1,1\}$, then 
$|\bar \bs_i\tilde \gamma_i| = \tilde\gamma_i$, and in particular,
\[
\frac{c}{c+k+\omega_i}\leq \frac{c}{c+k}
\]
and
\[
\frac{c}{c+k+\omega_i}\geq \frac{c}{c+k+\omega_{\max}} = \frac{c}{c+k} - \frac{c}{c+k}\frac{\omega_{\max}}{c+k+\omega_{\max}} \geq \frac{c}{c+k}-\frac{c\omega_{\max}}{k^2}
\]
Therefore, taking $C_4 = c\omega_{\max}+C_3$ completes the proof.

\end{proof}

\begin{lemma}
There exists some large enough $\tilde k$ where for all $k \geq \tilde k$, it must be that
\begin{equation}
\exists k'\geq k, \quad -\sign(\bx^{(k')})T(\bx^{(k')}) > \frac{\epsilon}{k'}.
\label{eq:lemma-helper-1}
\end{equation}
\end{lemma}

\begin{proof}
Define a partitioning $S_1\cup S_2 = \{1,...,q\}$, where
\[
S_1 = \{i : \xi_i > 0\}, \quad S_2 = \{j : \xi_j\leq 0\}.
\]
Defining $\bar \xi = \frac{c}{c+k}$,
\[
|\sum_{i=1}^q \beta_i \xi_i| = |\sum_{i\in S_1}\beta_i |\xi_i| -\sum_{j\in S_2}\beta_j|\xi_j|| \geq  \left(\bar \xi-\frac{C_4}{k^2}\right)\cdot\left|\sum_{i\in S_1}\beta_i-\sum_{j\in S_2}\beta_j\right|.
\]

By assumption, there does not exist a combination of $\beta_i$ where a specific linear combination could cancel them out; that is, suppose that there exists some constant $\bar \beta$, where for \emph{every} partition of sets $S_1$,$S_2$,
\[
0<\bar\beta :=\min_{S_1,S_2}  |\sum_{i\in S_1}\beta_i-\sum_{j\in S_2}\beta_j|.
\]
Then  
\[
|\sum_{i=1}^q \beta_i \xi_i|  \geq  \left(\frac{c}{c+k}-\frac{C_2}{k^2}\right)\bar\beta \geq \bar\beta\frac{\max\{C_2,c\}}{k}.
\]
Picking $\epsilon = \max\{C_2,c\}$ concludes the proof.
\end{proof}

\section{More Higher Order Discretization Methods}
Figure \ref{fig:more_quadratic} evaluates the performance of more multistep Frank-Wolfe methods, for a problem with  $m = 500$, $n = 100$, and $\alpha = 1000$.
\begin{figure}[!htb]
  \includegraphics[height=2in]{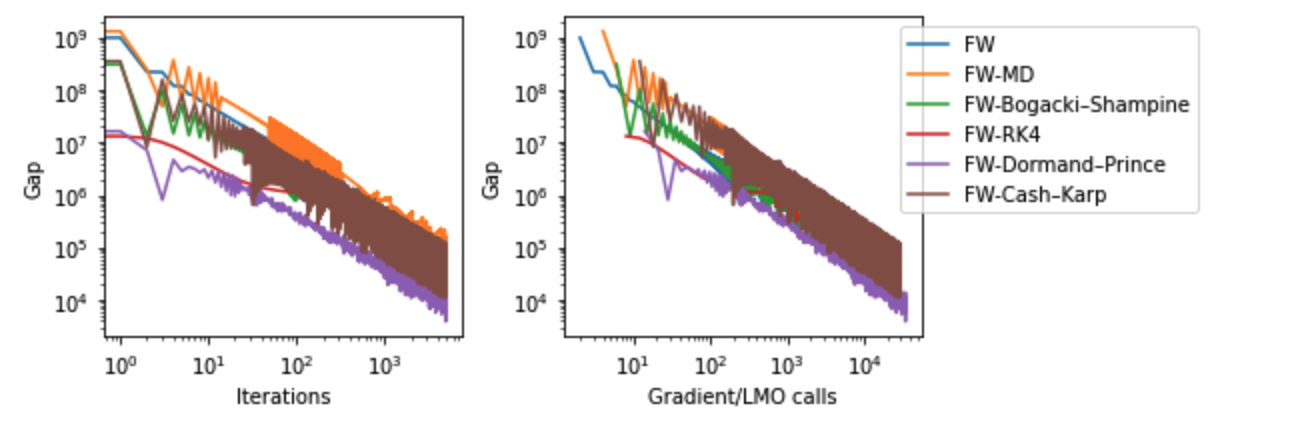}
  \includegraphics[height=2in]{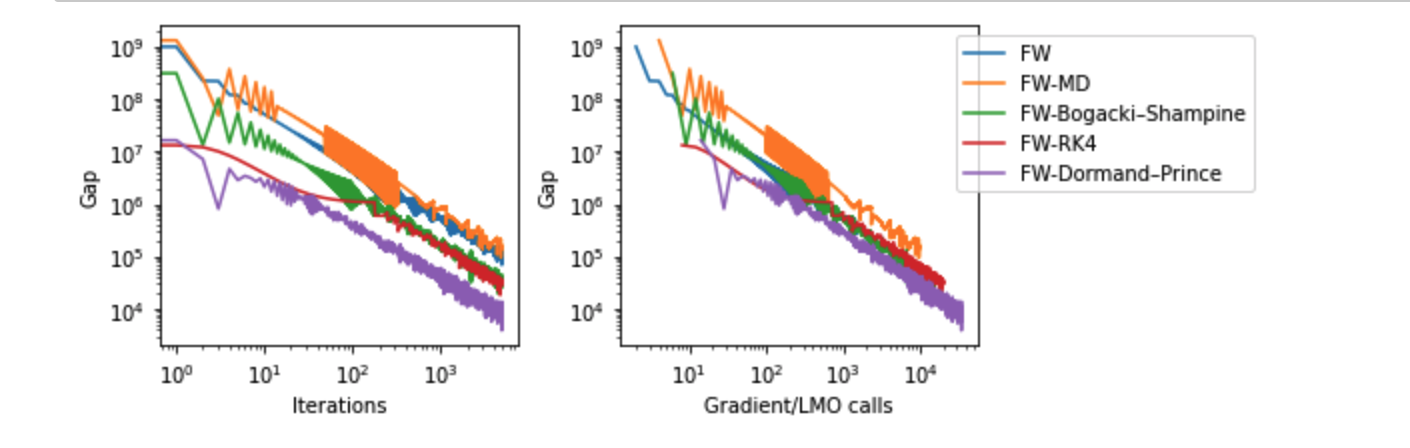}
  \caption{\textbf{Compressed sensing.} $500$ samples, $100$ features, 10\% sparsity ground truth, $\alpha = 1000$. }
  \label{fig:more_quadratic}
\end{figure}

\end{document}


\bibliographystyle{icml2022}

\newpage
\appendix
\onecolumn

\section{Runge Kutta methods}
\label{app:sec:rk}


\begin{itemize}
    \item Midpoint method
    \[
    A = \begin{bmatrix}
    0 & 0 \\ 1/2 & 0
    \end{bmatrix},
    \qquad
    \beta = \begin{bmatrix}
    0 \\1 
    \end{bmatrix},
    \qquad
    \omega = \begin{bmatrix}
    0\\1/2
    \end{bmatrix},\qquad
    \bz^{(1)} \approx \begin{bmatrix}
    -0.3810\\1.1429
    \end{bmatrix},\qquad
    \bz^{(2)} \approx \begin{bmatrix}
    -0.2222\\0.8889
    \end{bmatrix}
\]
     \item Runge Kutta 4th Order Tableau (44)
    \[
    A = \begin{bmatrix}
   0    & 0    & 0 & 0 \\
     1/2  & 0    & 0 & 0 \\
     0  & 1/2  & 0 & 0 \\
    0    & 0    & 1 & 0 
    \end{bmatrix},
    \qquad
    \beta = \begin{bmatrix}
  1/6   \\ 1/3  \\ 1/3 \\ 1/6
    \end{bmatrix},
    \qquad
    \omega = \begin{bmatrix}
    0   \\
        1/2 \\
        1/2 \\
        1  
    \end{bmatrix},\qquad
    \bz^{(1)} \approx \begin{bmatrix}
    0.2449\\
    0.5986\\
    0.5714\\
    0.3333
    \end{bmatrix}
    \]
    
        \item Runge Kutta 3/8 Rule Tableau (4)
    \[
    A = \begin{bmatrix}
   0    & 0    & 0 & 0 \\
     1/3  & 0    & 0 & 0 \\
     -1/3  & 1  & 0 & 0 \\
     1    & -1    & 1 & 0 \\
    \end{bmatrix},
    \qquad
    \beta = \begin{bmatrix}
 1/8   \\3/8  \\ 3/8 \\ 1/8
    \end{bmatrix},
    \qquad
    \omega = \begin{bmatrix}
   0    \\
        1/3  \\
        2/3 \\
        1  
    \end{bmatrix},\qquad
    \bz^{(1)} \approx \begin{bmatrix}
    0.1758\\
    0.6409\\
    0.6818\\
    0.2500
    \end{bmatrix}
    \]
\item 
Runge Kutta 5 Tableau
\[
    A = \begin{bmatrix}
 0 & 0 & 0 & 0 & 0 & 0\\
 1/4 & 0 & 0 & 0 & 0 & 0\\
  1/8 & 1/8 & 0 & 0 & 0 & 0\\
 0 & -1/2 & 1 & 0 & 0 & 0\\
 3/16 & 0 & 0 & 9/16 & 0 & 0\\
 -3/7 & 2/7 & 12/7 & -12/7 & 8/7 & 0\\
    \end{bmatrix},
    \qquad
    \beta = \begin{bmatrix}
7/90 \\ 0 \\ 32/90 \\ 12/90 \\ 32/90 \\7/90
    \end{bmatrix},
    \qquad
    \omega = \begin{bmatrix}
    0 \\
        1/4 \\
        1/4 \\
        1/2 \\
        3/4\\
        1 
    \end{bmatrix},\qquad
    \bz^{(1)} \approx \begin{bmatrix}
      0.1821\\
    0.0068\\
    0.8416\\
    0.3657\\
    0.9956\\
    0.2333
    \end{bmatrix}
    \]
\end{itemize}
In all examples, $\|\bz^{(k)}\|_\infty$ monotonically decays with $k$.

\section{Continuous Time Frank Wolfe Convergence Rate}
Proof of Prop. \ref{prop:fwflow-rate}
\begin{proof}
\footnote{Much of this proof is standard analysis for continuous time Frank-Wolfe, and is also presented in \cite{jacimovic1999continuous}. }
Note that by construction of $\nabla f(x)^Ts = \displaystyle\min_{y\in\mD}\, \nabla f(x)^Ty$, and since $f$ is convex, 
\[
f(x) - f(x^*) \leq \nabla f(x)^T(x-x^*) \leq \nabla f(x)^T(x-s).
\] 
Quantifying the objective value error as $\mE(t) = f(x(t))-f^*$ 
(where $f^* = \min_{x\in \mD} f(x)$ is attainable) then 
\begin{eqnarray*}
\dot \mE(t) &=& \nabla f(x(t))^T\dot x(t) \\
&\overset{\FWflow}{=}& \gamma(t) \nabla f(x(t))^T(s(t)-x(t)).
\end{eqnarray*}
Therefore, 
\begin{eqnarray*}
\dot \mE(t) &=& -\gamma(t) \underbrace{\nabla f(x(t))^T(x(t)-s(t))}_{\geq f(x)-f(x^*)}\\
&\leq& -\gamma(t) \mE(t) 
\end{eqnarray*}
giving an upper rate of
\[
\mE(t) \leq \mE(0) e^{-\int_0^t\gamma(\tau)d\tau}.
\]
In particular, picking the ``usual step size sequence"  $\gamma(t) = \tfrac{c}{t+c}$ gives the proposed rate \eqref{eq:cont-upper-rate}.
\end{proof}
\section{Feasiblity}
\label{app:sec:feas}

  \begin{proposition}[Feasiblity]
Consider a  $q$-stage multistep FW method
 defined by $A$, $\beta$, and $\omega$. For each given $k\geq 1$, define
  \[
 \bar \gamma_i^{(k)} = \frac{c}{c+k+\omega_i}, \qquad 
 \Gamma^{(k)} = \diag({\bar \gamma^{(k)}}_i),
 \]
 \[
 \mathbf P^{(k)} = \Gamma^{(k)} (I+A^T\Gamma^{(k)})^{-1}, \qquad \mathbf z^{(k)} = q \mathbf P^{(k)}\beta.
\]
Then if $0\leq \mathbf z^{(k)}\leq 1$ for all $k\geq 1$, then 
\[
\bx^{(0)}\in \mD\Rightarrow \bx^{(k)}\in \mD, \quad \forall k\geq 1.
\]
  \end{proposition}
  \begin{proof}

  For a given $k$, construct additionally
  \[
\mathbf Z = \begin{bmatrix} \xi_1 & \xi_2 & \cdots & \xi_q \end{bmatrix},
\]
\[
\bar{\mathbf X} = \begin{bmatrix} \bar \bx_1 & \bar \bx_2 & \cdots & \bar \bx_q \end{bmatrix},\quad
\bar{\mathbf S} = \begin{bmatrix} \bar \bs_1 & \bar \bs_2 & \cdots & \bar \bs_q \end{bmatrix}.
\]
where
      \begin{eqnarray*}
 {\bar \bx}_i &=& \bx^{(k)}+  \sum_{j=1}^q A_{ij} \xi_j, \\
 {\bar \bs}_i &=& \lmo({\bar \bx}_i).
\end{eqnarray*}
  Then we can rewrite \eqref{eq:general_discrete} as
  \begin{eqnarray*}
  \mathbf Z &=& (\bar {\mathbf S} - \bar{\mathbf X})\Gamma = (\bar {\mathbf S} - \bx^{(k)}\mathbf 1^T - \mathbf ZA^T)\Gamma \\
  &=& (\bar {\mathbf S} - \bx^{(k)}\mathbf 1^T )\mathbf P
  \end{eqnarray*}
  for shorthand  $\mathbf P = \mathbf P^{(k)}$ and $\Gamma=\Gamma^{(k)}$. 
 Then
 \begin{eqnarray*}
\bx^{(k+1)} 
&=& \bx^{(k)}(1-\mathbf 1^T \mathbf P\beta) + \bar {\mathbf S} \mathbf P\beta\\
&=&\frac{1}{q}\sum_{i=1}^q  \underbrace{(1-\bz^{(k)}_i)\bx^{(k)} + \bz^{(k)}_i\bar \bs_i }_{\hat\xi_i}
  \end{eqnarray*}
  where $\bz^{(k)}_i$ is the $i$th element of $\bz^{(k)}$, and $\beta = (\beta_1,...,\beta_q)$. 
Then if $0\leq \bz^{(k)}_i\leq 1$, then $\hat \xi_i$ is a convex combination of $\bx^{(k)}$ and $\bar \bs_i$, and $\hat \xi_i\in \mD$ if $\bx^{(k)}\in \mD$. Moreover, $\bx^{(k+1)}$ is an average of $\hat\xi_i$, and thus $\bx^{(k+1)}\in \mD$. Thus we have recursively shown that $\bx^{(k)}\in \mD$ for all $k$.
  \end{proof}

\section{Positive Runge-Kutta convergence result}
\label{app:sec:positiveresults}

\begin{lemma} 
After one step, the generalized Runge-Kutta method satisfies
\[
h(\bx^{(k+1)})-h(\bx^{(k)}) \leq
-\gamma^{(k+1)} h(\bx^{(k)}) + D_4(\gamma^{(k+1)})^2\]

where $h(\bx) = f(\bx) - f(\bx^*)$ and


\[
D_4 =\frac{LD_2^2+2LD_2D_3+2D_3}{2},\quad D_2 = c_1 D, \quad D_3 = c_2c_1 D, \quad c_1 = qp_{\max}, \quad c_2 = q \max_{ij} |A_{ij}|, \quad D = \diam(\mD).
\]

\label{lem:rungekutta-positive-onestep}
\end{lemma}
\begin{proof}
For ease of notation, we write $\bx = \bx^{(k)}$ and $\bx^+ = \bx^{(k+1)}$. We will use $\gamma=\gamma^{(k)} = \tfrac{c}{c+k}$, and $\bar \gamma_i = \tfrac{c}{c+k+\omega_i}$.
Now consider the generalized RK method
\begin{eqnarray*}
\bar \bx_i &=& \bx + \sum_{j=1}^q A_{ij} \xi_j\\
\xi_i &=& \underbrace{\frac{c}{c+k+\omega_i}}_{\tilde \gamma_i} ( \bs_i - \bar {\bx}_i )\\
\bx^+&=&  \bx + \sum_{i=1}^q \beta_i \xi_i\\
\end{eqnarray*}
where $\bs_i = \lmo(\bar \bx_i)$.


Define $D = \diam(\mD)$. 
We use the notation from  section
\ref{sec:rkmethod}. 
Denote the 2,$\infty$-norm as
\[
\|A\|_{2,\infty} = \max_j \|a_j\|_2
\]
where $a_j$ is the $j$th column of $A$. Note that all the element-wise elements in 
\[
\mathbf P^{(k)} = \Gamma^{(k)}(I+A^T\Gamma^{(k)})^{-1}
\]
is a decaying function of $k$, and thus defining $p_{\max} = \|\mathbf P^{(1)}\|_{2,\infty}$
we see that
\[
\|\bar {\mathbf Z}\|_{2,\infty} = \|(\bar {\mathbf S} - \bx^{(k)}\mathbf 1)\mathbf P^{(k)}\|_{2,\infty} \leq qp_{\max} D.
\]

Therefore, since $\bar {\mathbf Z} = (\bar {\mathbf S}-\bar {\mathbf X})\Gamma$, and all the diagonal elements of $\Gamma$ are at most 1, 
\[
\|\bs_i-\bar \bx_i\|_2  \leq qp_{\max} D =: D_2
\]
and
\[
\|\bx-\bar \bx_i\|_2 = \|\sum_{j=1}^q A_{ij} \gamma_j (\bs_j-\bar \bx_j)\|_2 \leq q \max_{ij} |A_{ij}| \gamma D_2 =: D_3 \gamma.
\]

Then 
\begin{eqnarray*}
f(\bx^+)-f(\bx) &\leq&  \nabla f(\bx)^T(\bx^+-\bx) + \frac{L}{2}\|\bx^+-\bx\|_2^2\\
&=&  \sum_i \beta_i \tilde \gamma_i \nabla f(\bx)^T(\bs_i-\bar \bx_i) + \frac{L}{2}\underbrace{\|\sum_i \beta_i \tilde \gamma_i (\bs_i-\bar \bx_i)\|_2^2}_{\leq \gamma^2 D_2^2}\\
&=&  \sum_i \beta_i \tilde \gamma_i (\nabla f(\bx)-\nabla f(\bar \bx_i))^T(\bs_i-\bar \bx_i) +
 \sum_i \beta_i \tilde \gamma_i \underbrace{\nabla f(\bar \bx_i)^T(\bs_i-\bar \bx_i)}_{-\gap(\bar \bx_i)} +
 \frac{L\gamma^2D_2^2}{2}\\
 &\leq& \sum_i \beta_i\underbrace{ \tilde \gamma_i}_{\leq \gamma} \underbrace{\|\nabla f(\bx)-\nabla f(\bar \bx_i)\|_2}_{L\|\bx-\bar \bx_i\|_2=L\gamma D_3}\underbrace{\|\bs_i-\bar \bx_i\|_2}_{\leq D_2} - \sum_i \beta_i\tilde\gamma_i \gap(\bar \bx_i) + \frac{L\gamma^2 D_2^2}{2}\\
 &\leq &  -\sum_i \beta_i\tilde\gamma_i \gap(\bar \bx_i) + \frac{L\gamma^2 D_2^2}{2} + \frac{2L\gamma^2 D_2D_3}{2}\\ 
 &\leq& -\gamma^+ \sum_i\beta_i h(\bar \bx_i)  + \frac{L\gamma^2D_2(D_2+2D_3)}{2} 
\end{eqnarray*}
where $\gamma=\gamma_k$, and $\gamma^+=\gamma_{k+1}$. Now assume $f$ is also $L_2$-continuous, e.g. $|f(\bx_1)-f(\bx_2)|\leq L_2\|\bx_1-\bx_2\|_2$. Then, taking  $h(\bx) = f(\bx) -f(\bx^*)$,

\begin{eqnarray*}
h(\bx^+)-h(\bx) 
 &\leq& -\gamma^+ \sum_i\beta_i (h(\bar \bx_i)-h(\bx)) -\gamma^+ \underbrace{\sum_i\beta_i}_{=1} h(\bx) + \frac{L\gamma^2D_2(D_2+2D_3)}{2}\\
 &\leq & 
 \gamma \sum_i\beta_iL_2 \underbrace{\|\bar \bx_i-\bx\|_2}_{\leq \gamma D_3}-\gamma^+ h(\bx) + \frac{L\gamma^2D_2(D_2+2D_3)}{2}\\
 &\leq & -\gamma^+ h(\bx) + \frac{\gamma^2(LD_2^2+2LD_2D_3+2D_3)}{2}\\
 &\leq& -\gamma^+ h(\bx) + D_4(\gamma^+)^2
\end{eqnarray*}
where 
$D_4 =\frac{LD_2^2+2LD_2D_3+2D_3}{2}$and we use $2 \geq (\gamma/\gamma^+)^2$ for all $k \geq 1$.

\end{proof}

Proof of Prop. \ref{prop:rungekutta-positive}
\begin{proof}
After establishing Lemma \ref{lem:rungekutta-positive-onestep}, the rest of the proof is a recursive argument, almost identical to that in  \cite{jaggi2013revisiting}. 

At $k = 0$, we define $h_0 =\max\{ h(\bx^{(0)}), \frac{ D_4c^2}{c-1}\}$, 
and it is clear that $h(\bx^{(0)}) \leq  h_0$.

Now suppose that for some $k$, $h(\bx^{(k)}) \leq \frac{h_0}{k+1}$. Then
\begin{eqnarray*}
h(x_{k+1}) &\leq &  h(\bx_k) - \gamma_{k+1}h(\bx^{(k)}) + {D_4} \gamma_{k+1}^2\\
&\leq & \frac{h_0}{k+1}\cdot \frac{k+1}{c+k+1} + D_4 \frac{c^2}{(c+k+1)^2}\\
&=& \frac{h_0}{c+k+1} + D_4 \frac{c^2}{(c+k+1)^2}\\
&=& \left( h_0 + \frac{D_4c^2}{c+k+1}\right) \left(\frac{k+2}{c+k+1}\right) \frac{1}{k+2}
\\
&\leq& h_0\left( 1+\frac{c-1}{c+k+1}\right) \left(\frac{k+2}{c+k+1}\right) \frac{1}{k+2}\\
\\
&\leq& h_0\underbrace{\left( \frac{2c+ k }{c+k+1}\right) \left(\frac{k+2}{c+k+1}\right)}_{\leq 1} \frac{1}{k+2}.
\end{eqnarray*}
\end{proof}

\section{Negative Runge-Kutta convergence result}
\label{app:sec:negativeresults}

This section gives the proof for Proposition \ref{prop:rungekutta-negative}.

\begin{lemma}[$O(1/k)$ rate]\label{lem:o1krate}
Start with $\bx^{(0)} = 1$. Then consider the sequence defined by
\[
\bx^{(k+1)}= |\bx^{(k)} - \frac{c_k}{k}|
\]
where, no matter how large $k$ is, there exist some constant where  $C_1 < \max_{k'>k} c_{k'} $.
(That is, although $c_k$ can be anything, the smallest upper bound of $c_k$ does not decay.) Then
\[
\sup_{k'\geq k} |\bx^{(k')}| = \Omega(1/k).
\]
That is, the smallest upper bound of $|\bx^{(k)}|$ at least of order $1/k$.
\end{lemma}
\begin{proof}
We will show that the smallest upper bound of $|\bx^{(k)}|$ is larger than $C_1/(2k)$.

Proof by contradiction. 
Suppose that at some point $K$, for all $k \geq K$,  $|\bx^{(k)}| < C_1/(2k)$. Then from that point forward, 
\[
\sign(\bx^{(k)}-\frac{c_k}{k}) = -\sign(\bx^{(k)})
\]
and there exists some $k' > k$ where $c_{k'} > C_1$. Therefore, at that point,
\[
|\bx^{(k'+1)}| = \frac{c_{k'}}{k'}-|\bx^{(k')}| 
\geq \frac{C_1}{2k'}>\frac{C_1}{2(k'+1)}.
\]
This immediately establishes a contradiction.
\end{proof}

Now define the operator 
\[
T(\bx^{(k)}) = \bx^{(k+1)}-\bx^{(k)}
\]
and note that 
\[
|\bx^{(k+1)}| = |\bx^{(k)}+T(\bx^{(k)})| = | |\bx_k|+\sign(\bx^{(k)})T(\bx^{(k)})|.
\]
Thus, if we can show that there exist some $\epsilon$, agnostic to $k$ (but possibly related to Runge Kutta design parameters), and
\begin{equation}
\exists k'\geq k, \quad -\sign(\bx^{(k')})T(\bx^{(k')}) > \frac{\epsilon}{k'},\quad \forall k,
\label{eq:lemma-helper-1}
\end{equation}
 then based on the previous lemma, this shows $\sup_{k'>k}|\bx_{k'}| = \Omega (1/k)$ as the smallest possible upper bound.

\begin{lemma}
Assuming that $0 <q\mb P^{(k)} \beta < 1$ then there exists a finite point $\tilde k$ where for all $k > \tilde k$, 
\[
|\bx^{(k)}| \leq \frac{C_2}{k}
\]
for some $C_2 \geq 0$.
\end{lemma}
\begin{proof}

We again use the block matrix notation
\[
\bZ^{(k)} = (\bar \bS-\bx^{(k)}\mb 1^T) \Gamma^{(k)}(I+A^T\Gamma^{(k)})^{-1}
\]
where $\Gamma^{(k)} = \diag(\tilde \gamma_i^{(k)})$ and each element $\tilde \gamma_i^{(k)} \leq \gamma^{(k)}$.

First, note that by construction, since 
\[
\|\bar \bS-\bx^{(k)}\mb 1^T\|_{2,\infty} \leq D_4, \quad \|(I+A^T\Gamma^{(k)})^{-1}\|_2 \leq  \|(I+A^T\Gamma^{(0)})^{-1}\|_2
\]
are bounded above by constants, then 
\[
\|\bZ^{(k)}\|_\infty \leq \frac{c}{c+k} C_1
\]
for $C_1 = D_4\|(I+A^T\Gamma^{(0)})^{-1}\|_2 $.

First find constants $C_3 $, $C_4$, and $\bar k$ such that
\begin{equation}
\frac{C_3}{k} \leq \mb 1^T \mathbf P^{(k)} \beta \leq \frac{C_4}{k}, \quad \forall k>\bar k,
\label{eq:boundx_helper}
\end{equation}
and such constants always exist, since
by assumption, there exists some $a_{\min} > 0$, $a_{\max}<1$ and some $k'$ where
\[
a_{\min} <q\mb P^{(k')} \beta < a_{\max} \Rightarrow \frac{a_{\min}}{q \gamma_{\max}} \leq (I+A^T\Gamma^{(k')})^{-1} \beta \leq \frac{a_{\max}}{q \gamma_{\min}}
\]
where 
\[
\gamma_{\min} = \min_i \frac{c}{c+k'+\omega^{(k')}_i}, \qquad \gamma_{\max} = \frac{c}{c+k'}.
\]
Additionally, for all $k > c+1$,
\[
\frac{c}{2k}\leq \frac{c}{c+k+1} \leq \Gamma^{(k)}_{ii} \leq \frac{c}{c+k} \leq \frac{c}{k}.
\]
Therefore taking
\[
C_3 = \frac{ca_{\min}}{2q \gamma_{\max} }, \qquad C_4 = \frac{c a_{\max}}{q\gamma_{\min}}, \qquad \bar k = \max\{k',c+1\}
\]
satisfies \eqref{eq:boundx_helper}.

Now define
\[
C_2 = \max\{|\bx^{(1)}|,4cq C_1 \|A\|_\infty, 4C_3, 4C_4\}.
\]
We will now inductively show that $|\bx^{(k)}|\leq \frac{C_2}{k}$. From the definition of $C_2$, we have the base case for $k = 1$:
\[
|\bx^{(1)}| \leq \frac{|\bx^{(1)}|}{1} \leq \frac{C_2}{k}.
\]
Now assume that $|\bx^{(k)}|\leq \frac{C_2}{k}$. Recall that
\[
\bx^{(k+1)} = \bx^{(k)}(1-\mb 1^T \mathbf P^{(k)} \beta) + \bar \bS\mathbf P^{(k)} \beta, \qquad \bar \bS = [\bar \bs_1,...,\bar\bs_q], \qquad \bs_i = -\sign(\bar \bx_i)
\]
and we denote the composite mixing term $\bar \gamma^{(k)} = \mb 1^T\mb P^{(k)} \beta$. 
We now look at two cases separately.
\begin{itemize}
    \item Suppose first that $\bar \bS = -\sign(\bx^{(k)}\mb 1^T)$, e.g. $\sign(\bar\bx_i) = \sign(\bx^{(k)})$ for all $i$. Then 
    \[
    \bar \bS \mb P^{(k)} \beta = -\sign(\bx^{(k)})\bar \gamma_k,
    \]
    and 
    \begin{eqnarray*}
    |\bx^{(k+1)}| &=&  |\bx^{(k)}(1-\bar\gamma^{(k)}) + \bar \bS\mathbf P^{(k)} \beta|\\
    &=&  |\bx^{(k)}(1-\bar\gamma^{(k)}) -\sign(\bx^{(k)})\bar\gamma^{(k)}|\\
    &=&  |\underbrace{\sign(\bx^{(k)})\bx^{(k)}}_{|\bx^{(k)}|}(1-\bar\gamma^{(k)}) -\underbrace{\sign(\bx^{(k)})\sign(\bx^{(k)})}_{=1}\bar\gamma^{(k)}|\\
    &=&  | |\bx^{(k)}|(1-\bar\gamma^{(k)}) -\bar\gamma^{(k)}|\\
    &\leq&  \max\{ |\bx^{(k)}|(1-\bar\gamma^{(k)}) -\bar\gamma^{(k)},
    \bar\gamma^{(k)} - |\bx^{(k)}|(1-\bar\gamma^{(k)}) 
    \}\\
    &\leq&  \max\Bigg\{ \underbrace{\frac{C_2}{k}(1-\frac{C_3}{k}) -\frac{C_3}{k}}_{(*)},
    \frac{C_4}{k} \Bigg\}\\
    \end{eqnarray*}
    and when $k \geq \frac{C_2}{C_3} \iff C_3 \geq \frac{C_2}{k}$,
    \[
    (*) \leq C_2\left(\frac{1}{k} -\frac{1}{k^2}\right) \leq \frac{C_2}{k+1}.
    \]
    Taking also $C_4 \leq \frac{C_2}{4}$,
    \begin{eqnarray*}
    |\bx^{(k+1)}| 
    \leq  \max\left\{ \frac{C_2}{k+1},
    \frac{C_2}{4k} \right\} \leq \frac{C_2}{k+1}\\
    \end{eqnarray*}
    for all $k \geq 1$.
    
    \item Now suppose that there is some $i$ where $\bar\bs_i = \sign(\bx^{(k)}\mb1^T)$. 
    Now since 
\[
\bar \bS = -\sign(\bx^{(k)}\mb 1^T + \bZ A^T)
\]
then this implies that 
$ |\bx^{(k)}| < (\bZ A^T)_i$.
But since 
\[
|(\bZ A^T)_i| \leq \|\bZ\|_\infty \|A\|_\infty q \leq \frac{c}{c+k}(C_1\|A\|_\infty q) \leq \frac{C_2}{4k}, 
\]
this implies that
\[
|\bx^{(k+1)}| \leq \frac{C_2}{4k}(1-\frac{C_3}{k}) + \frac{C_2}{4k} \leq \frac{C_2}{2k}\leq \frac{C_2}{k+1}, \quad \forall k > 1.
\]
\end{itemize}
Thus we have shown the induction step, which completes the proof.
\end{proof}

\begin{lemma}

There exists a finite point $\tilde k$ where for all $k > \tilde k$, 
\[
 \frac{c}{c+k}-\frac{C_4}{k^2} < |\xi_i| < \frac{c}{c+k}+\frac{C_4}{k^2}
\]
for some constant $C_4>0$.
\end{lemma}
\begin{proof}
Our goal is to show that 
\[
\gamma^{(k)} - \frac{C_4}{k^2} \leq \|\bZ\|_\infty \leq  \gamma^{(k)} + \frac{C_4}{k^2}
\]
for some $C_4\geq 0$, and for all $k \geq k'$ for some $k' \geq 0$.
Using the Woodbury matrix identity,
\[
\Gamma(I+A^T\Gamma)^{-1} = \Gamma\left(I - A^T(I+\Gamma A^T)^{-1} \Gamma\right)
\]
and thus
\[
\bZ^{(k)} = \bar \bS\Gamma -\underbrace{\left(\bx^{(k)}\mb 1^T \Gamma + (\bar \bS-\bx^{(k)}\mb 1^T)\Gamma A^T(I+\Gamma A^T)^{-1}\Gamma\right)}_{\mathbf B}.
\]
and thus
\[
   |\bar \bs_i \tilde \gamma_i| - \frac{C_3}{k^2} \leq |\xi_i^{(k)}| \leq |\bar \bs_i \tilde \gamma_i| + \frac{C_3}{k^2}
\]
where via triangle inequalities and norm decompositions,
\[
\frac{C_3}{k^2} = \max_i |\mathbf B_i| \leq \underbrace{|\bx^{(k)}|}_{O(1/k)} \gamma_k + D_4 \gamma_k^2 \|A\|_\infty (I+\Gamma^{(0)}A^T)^{-1} = O(1/k^2).
\]
Finally, since $\bar \bs_i\in \{-1,1\}$, then 
$|\bar \bs_i\tilde \gamma_i| = \tilde\gamma_i$, and in particular,
\[
\frac{c}{c+k+\omega_i}\leq \frac{c}{c+k}
\]
and
\[
\frac{c}{c+k+\omega_i}\geq \frac{c}{c+k+\omega_{\max}} = \frac{c}{c+k} - \frac{c}{c+k}\frac{\omega_{\max}}{c+k+\omega_{\max}} \geq \frac{c}{c+k}-\frac{c\omega_{\max}}{k^2}
\]
Therefore, taking $C_4 = c\omega_{\max}+C_3$ completes the proof.

\end{proof}

\begin{lemma}
There exists some large enough $\tilde k$ where for all $k \geq \tilde k$, it must be that
\begin{equation}
\exists k'\geq k, \quad -\sign(\bx^{(k')})T(\bx^{(k')}) > \frac{\epsilon}{k'}.
\label{eq:lemma-helper-1}
\end{equation}
\end{lemma}

\begin{proof}
Define a partitioning $S_1\cup S_2 = \{1,...,q\}$, where
\[
S_1 = \{i : \xi_i > 0\}, \quad S_2 = \{j : \xi_j\leq 0\}.
\]
Defining $\bar \xi = \frac{c}{c+k}$,
\[
|\sum_{i=1}^q \beta_i \xi_i| = |\sum_{i\in S_1}\beta_i |\xi_i| -\sum_{j\in S_2}\beta_j|\xi_j|| \geq  \left(\bar \xi-\frac{C_4}{k^2}\right)\cdot\left|\sum_{i\in S_1}\beta_i-\sum_{j\in S_2}\beta_j\right|.
\]

By assumption, there does not exist a combination of $\beta_i$ where a specific linear combination could cancel them out; that is, suppose that there exists some constant $\bar \beta$, where for \emph{every} partition of sets $S_1$,$S_2$,
\[
0<\bar\beta :=\min_{S_1,S_2}  |\sum_{i\in S_1}\beta_i-\sum_{j\in S_2}\beta_j|.
\]
Then  
\[
|\sum_{i=1}^q \beta_i \xi_i|  \geq  \left(\frac{c}{c+k}-\frac{C_2}{k^2}\right)\bar\beta \geq \bar\beta\frac{\max\{C_2,c\}}{k}.
\]
Picking $\epsilon = \max\{C_2,c\}$ concludes the proof.
\end{proof}

\section{More Higher Order Discretization Methods}
Figure \ref{fig:more_quadratic} evaluates the performance of more multistep Frank-Wolfe methods, for a problem with  $m = 500$, $n = 100$, and $\alpha = 1000$.
\begin{figure}[!htb]
  \includegraphics[height=2in]{figs/more_higher_order_discretization_methods_1.png}
  \includegraphics[height=2in]{figs/more_higher_order_discretization_methods_2.png}
  \caption{\textbf{Compressed sensing.} $500$ samples, $100$ features, 10\% sparsity ground truth, $\alpha = 1000$. }
  \label{fig:more_quadratic}
\end{figure}